\crefname{hypothesis}{Hypothesis}{Hypotheses}
\title{Low-rank tree tensor network operators \\ for long-range pairwise interactions\thanks{{\bf Funding:} The work of Dominik Sulz was funded by the Deutsche Forschungsgemeinschaft (DFG, German Research Foundation) through the Research Unit FOR 5413/1, Grant No. 465199066 and the Research Unit TRR 352, Project-ID 470903074. The work of Gianluca Ceruti was partly funded by the SNSF research project \emph{Fast algorithms from low-rank updates},
grant number: 200020\_178806.}}
\author{Gianluca Ceruti\thanks{University of Innsbruck, Austria. (\email{gianluca.ceruti@uibk.ac.at}).} 
\and Daniel Kressner\thanks{ANCHP, EPF Lausanne, Switzerland. (\email{daniel.kressner@epfl.ch}).} \and Dominik Sulz\thanks{University of Tuebingen, Germany. 
  (\email{dominik.sulz@uni-tuebingen.de}
  ).}}
\newcommand{\norm}[1]{\left\lVert#1\right\rVert}
\newcommand{\C}{\mathbb{C}}
\newcommand{\U}{{\bf{U}}}
\newcommand{\I}{{\bf{I}}}
\newcommand{\A}{{\bf{A}}}
\newcommand{\V}{{\boldsymbol{V}}}
\newcommand{\bS}{{\boldsymbol{S}}}\newcommand{\bR}{{\boldsymbol{R}}}
\newcommand{\ba}{{\bf{a}}}
\newcommand{\be}{{\bf{e}}}
\newcommand{\bh}{{\bf{h}}}
\newcommand{\bu}{{\bf{u}}}
\newcommand{\boldbeta}{\boldsymbol{\beta}}
\newcommand{\vect}{\mathop{\mathrm{vec}}}
\newcommand{\rank}{\mathop{\mathrm{rank}}}
\DeclareMathOperator{\mat}{\bf{Mat}}
\begin{document}
\maketitle

\begin{abstract}
Compactly representing and efficently applying linear operators are fundamental ingredients in tensor network methods for simulating quantum many-body problems and solving high-dimensional problems in scientific computing. In this work, we study such representations for tree tensor networks, the so called  tree tensor network operators (TTNOs), paying particular attention to Hamiltonian operators that involve long-range pairwise interactions between particles. Generalizing the work by Lin, Tong, and others on matrix product operators, we establish a direct connection between the hierarchical low-rank structure of the interaction matrix and the TTNO  property. This connection allows us to arrive at very compact TTNO representations by compressing the interaction matrix into a hierarchically semi-separable matrix.
Numerical experiments for different quantum spin systems validate our results and highlight the potential advantages of TTNOs over matrix product operators.
\end{abstract}

\begin{keywords}
Tensor networks, linear operators, low-rank tensors, hierarchical semi-separable matrices, hierarchical Tucker format.
\end{keywords}

\begin{MSCcodes}
65F55, 15A69, 68Q12
\end{MSCcodes}

\section{Introduction}

In a wide variety of situations, tensor network methods have demonstrated their power to address the computational challenges imposed by high dimensionality. Most notably, this includes matrix product states (MPS) for many-body quantum systems~\cite{Schollwock2011} or the (mathematically equivalent) tensor train (TT) decomposition~\cite{Os11} for applications in engineering and scientific computing; see~\cite{surveyBachmayr2023,surveyGKT13,Khoromskij2018} for surveys. Many of these approaches crucially rely on a suitable representation of the operator describing the system. In this work, we will consider operators that describe pairwise interactions between sites (or modes) of a system:
\begin{align} \label{eq:Ham_intro}
    \mathcal H: \C^{n_1\times \cdots \times n_d} \to \C^{n_1\times \cdots \times n_d}, \quad \mathcal H = \sum_{k=1}^d \mathcal{D}^{(k)} + \sum_{1\le i < j \le d} \beta(i,j)  \mathcal{A}^{(i)} \mathcal{A}^{(j)},
\end{align}
where ${\mathcal A^{(i)}}$ and ${\mathcal D^{(i)}}$ represent operators that act on the $i$th site only, while the coefficients $\beta(i,j)$ characterize the strength of interactions between pairs of sites. The matrix representation of a single-site operator like ${\mathcal A^{(i)}}$ takes the form $\I_{n_d} \otimes \dots  \otimes \I_{n_{i+1}} \otimes \A_i \otimes \I_{n_{i-1}} \otimes\dots  \otimes \I_{n_{1}}$ for some matrix $\A_i \in \mathbb C^{n_k\times n_k}$, where $\I$ denotes an identity matrix of suitable size and $\otimes$ denotes the usual Kronecker product.
In scientific computing, an operator of the form~\eqref{eq:Ham_intro} arise from the discretization of partial differential equations (PDEs) on $d$-dimensional hypercubes, while for quantum systems a Hamiltonian of the form~\eqref{eq:Ham_intro} describes the interaction between pairs of particles. If the matrix $\boldsymbol{\beta} \in \mathbb C^{d\times d}$ containing the coefficients $\beta(i,j)$ is sparse, only a few pairs interact with each other. In particular, when $\boldsymbol{\beta}$ is banded, only short-range interactions are allowed, that is, only nearby sites interact with each other. When $\boldsymbol{\beta}$ does not have banded structure, distant sites interact with each other. The presence of such long-range interactions significantly complicates the use of tensor network methods, including the compact representation of $\mathcal H$.

%

To efficiently use MPS / TT representations for, e.g., computing ground states or performing time evolution it is advantageous to avoid the canonical representation~\eqref{eq:Ham_intro} of the operator $\mathcal H$ and use a matrix product operator (MPO) representation~\cite{PMCV2012,Schollwock2011,VG2004}. For example, when applying $\mathcal H$ in canonical representation to a tensor in TT decomposition, the associated TT representation ranks (which critically determine memory complexity) grow by a factor around $d^2/2$. If, instead, $\mathcal H$ is represented in  MPO format, its potentially much smaller MPO ranks determine this growth. In particular, it is well known that the MPO ranks are constant (not depending on $d$) for short-range interactions. In the context of PDEs, such MPO representations for $\mathcal H$ have been constructed in~\cite{Dolgov2013,Kazeev2013}, highlighting the connection between the MPO representation and the quasi-separability~\cite{Eidelman} of~$\boldsymbol{\beta}$. In the context of quantum systems, a similar connection has been made in~\cite{LT21}, additionally discussing the compression of~$\boldsymbol{\beta}$ as a quasi-separable matrix. This is particularly important for long-range interactions like the Coulomb interaction $\beta(i,j) = 1/|i-j|$, for which $\boldsymbol{\beta}$ is not a quasi-separable matrix but can be well approximated by one. A quasi-optimal algorithm for quasi-separable approximation is described in~\cite{Massei2018}, but to the best of our knowledge no software implementation is publicly available.

While MPS excels at compressing ground states for short-range and/or translation-invariant interactions~\cite{MR2338267}, it may struggle to attain compact representations in the presence of long-range interactions. Due to the frequent occurrence of long-range interactions in applications, this motivates the need to consider more general tensor network formats. In this work, we will consider Tree Tensor Networks (TTNs) at a level of generality that includes any connected loop-free tensor network. TTNs include MPS / TT as a special case when choosing a highly unbalanced, degenerate binary tree. However, TTNs are more commonly used with a balanced (binary) tree, corresponding to the hierarchical Tucker format~\cite{G10} in scientific computing.
This comes with the advantage that the distances between nodes within the tree scale logarithmically with $d$. Consequently, the representation rank in a TTN can be expected to remain smaller than for MPS, where this distance scales linearly.
%
In the realm of quantum systems, TTNs have demonstrated promising numerical accuracy in capturing long-range interactions and correlations among particles~\cite{KRB2020,sulz2023}. However, the efficient realization of this promise requires a compact representation of the operator $\mathcal H$ that corresponds to the TTN format. 
The construction of such Tree Tensor Network Operators (TTNOs) has been discussed for nearest-neighbor interaction in~\cite{Tobler2012}.
In \cite{milbradt2023TTNO}, state diagrams are used for the construction of quantum Hamiltonians in the TTN format.

In this paper, we generalize the work by Lin and Tong~\cite{LT21} on MPOs to TTNOs for general tree tensor networks. Our main contribution is show a direct connection between the hierarchical semi-separable (HSS) decomposition~\cite{XXCB14} of the interaction matrix $\boldsymbol{\beta}$ and the TTNO for $\mathcal H$ from~\eqref{eq:Ham_intro}. This connection allows to leverage existing algorithms and software, such as the hm-toolbox~\cite{MRK20}, for compressing and working with HSS matrices.
We prove that when the interaction matrix $\boldsymbol{\beta}$ has HSS rank of $k$ then the maximum rank of the TTNO is bounded by $k+2$ and we provide an explicit construction of this TTNO. We also quantify the error (in the spectral norm) inflicted on $\mathcal H$ by the compression of $\boldsymbol{\beta}$, which predicts that the error will remain small for interaction matrices typically encountered in practice, including Coulomb interactions. This is confirmed by our numerical experiments, which exhibit that small HSS ranks are commonly encountered in relevant quantum scenarios, without needing to impose strong assumptions on the interaction matrix, such as decaying interactions or translational invariance. 

The rest of this paper is structured as follows. In Section~\ref{sec:ttn}, we recall TTNs and TTNOs. In Section~\ref{sec:ttnounstruc}, we first show how to construct TTNO without imposing conditions on $\boldsymbol{\beta}$. Section~\ref{sec:ttnohss} contains our main result, the construction of a TTNO from an HSS decomposition of $\boldsymbol{\beta}$, including an error bound when $\boldsymbol{\beta}$ is compressed in the HSS format. In Section~\ref{sec:numexp}, we verify our results numerically for several quantum spin systems. Further, we compare the TTNO representation for a balanced binary trees with the TTNO for a degenerate binary tree (corresponding to an MPO) in the context of long-range interactions.  

\section{Tree tensor networks} \label{sec:ttn}

Following~\cite{CeLW21,CLS23}, this section introduces a tree tensor network formalism that includes the MPS/TT and HT tensor formats discussed in the introduction. 
%

\subsection{Dimension tree}

Tree tensor networks rely on a recursive partition of the dimensions $1$ through $d$. This partitioning is represented by a \emph{dimension tree} $\bar \tau$. In the following definition,
$L(\tau)$ denotes the set of leaves of a tree $\tau$ and we always assume that it consists of a (nonempty) set of consecutive integers.

\begin{definition}[dimension tree] \label{def:dimtree}
For $d\ge 1$, a \emph{dimension tree} $\bar \tau$ with leaves $L(\bar \tau) = \{1,\ldots, d\}$ is recursively defined as follows:
\begin{enumerate}[(i)] 
 \item For a dimension tree $\tau$ with more than one leaf, $L(\tau)$ is partitioned as 
\[
  L(\tau) = L(\tau_1) \mathbin{\dot{\cup}} L(\tau_2) \mathbin{\dot{\cup}} \cdots \mathbin{\dot{\cup}} L(\tau_m)
\]
for some $m \ge 2$. 
 Corresponding to this partition, the root of $\tau$ has $m$ children that are roots of
 dimension trees $\tau_1, \dots, \tau_m$ such that
 $L(\tau_i) = \mathcal L_i$ and
\begin{equation} \label{eq:partitionrel}
    \max L(\tau_i) < \min L(\tau_j) \qquad \forall i < j.
\end{equation}
We will write $\tau = (\tau_1, \dots, \tau_m)$.
 \item For a singleton $L(\tau) = \{\ell\}$, it holds that $\tau = \ell$. 
 
\end{enumerate}
\end{definition}
Any tree $\tau = (\tau_1, \dots, \tau_m)$ appearing in the recursive construction of Definition~\ref{def:dimtree} is a subtree of $\bar \tau$. The set of all such subtrees will be denoted by $\mathcal T(\bar \tau)$. 
%
%
%
By condition \eqref{eq:partitionrel}, 
for any subtree $\tau = (\tau_1 ,\dots  , \tau_m ) \in \mathcal T(\bar \tau)$ there exist integers $\ell_1 < \ell_2 < \cdots < \ell_{m+1}$ such that
\[
 L(\tau_1) = \{ \ell_1, \ell_1 + 1, \ldots, \ell_2-1 \},\quad \ldots,\quad   L(\tau_m) = \{ \ell_m, \ell_m + 1, \ldots, \ell_{m+1}-1 \}.
\]
For instance, when $d = 6$, an example of a dimension tree is $\bar \tau = ( (1,2,3),(4,5,6) )$ -- see Figure \ref{fig:tree}.
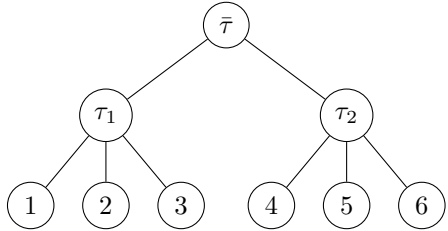
\begin{figure}[H]
	\label{fig:tree} 
	\begin{center}	
	\begin{tikzpicture}[every node/.style={},level 1/.style={sibling distance=32mm},level 2/.style={sibling distance=10mm},level distance = 12mm]
		\node[circle,draw] { $\bar{\tau}$ }
		child { node[circle,draw] { $\tau_1$ }
			child { node[circle,draw] {1} }
			child { node[circle,draw] {2} }
			child { node[circle,draw] {3} } }
		child { node[circle,draw] { $\tau_2$ }
			child { node[circle,draw] {4} }
			child { node[circle,draw] {5} }
			child { node[circle,draw] {6} } };
		
	\end{tikzpicture}
	
	  \caption{Graphical representation of the tree $\bar{\tau}=(\tau_1,\tau_2)$ with $\tau_1 = (1,2,3)$ and $\tau_2 = (4,5,6)$.}
	\end{center}	
	
\end{figure} 

\subsection{Definition of tree tensor networks}

To work with tensors, it is useful to begin by recalling the concept of matricizations. 
Consider a tensor $C \in \C^{n_1 \times \dots \times n_d}$ and let $I$ be any subset of $\{1,\dots,d\}$. The $I$-matricization of $C$ is denoted as $\mat_I(C) \in \C^{n_I \times n_{\neg I}}$, where $n_i = \prod_{i \in I} n_{\tau_i}$ and $n_{\neg I} = \prod_{i \not\in I} n_{\tau_i}$, and it is obtained by merging the indices of the modes in $I$ into row indices while the remaining ones are merged into column indices, following the reverse lexicographic order.

With each leaf $\ell \in L(\bar \tau) = \{1,\ldots,d\}$ of a dimension tree $\bar \tau$, we associate a \emph{basis matrix} $\U_\ell \in \C^{n_\ell \times r_\ell}$. With each subtree $\tau = (\tau_1 ,\dots  , \tau_m ) \in \mathcal T(\bar \tau)$, we associate a {\it transfer tensor} $C_{\tau} \in \C^{r_{\tau_1} \times \dots \times r_{\tau_m} \times r_\tau}$. The integers $r_\tau$ determine the sizes of the basis matrices and transfer tensors. We always set $r_{\bar \tau} = 1$ and neglect the trailing singleton dimension of the transfer tensor $C_{\bar \tau}$ at the root. In particular, if $\bar \tau = (\tau_1,\tau_2)$ then $C_{\bar \tau}$ becomes an $r_{\tau_1}\times r_{\tau_2}$ matrix. We are now in the position to define a tree tensor network recursively~\cite{CLS23,CeLW21}.
\begin{definition}[Tree tensor network] \label{def:ttn}
For a given dimension tree $\bar \tau$, consider basis matrices $\U_\ell$ and transfer tensors $C_\tau$ defined as above. 
Then the corresponding \emph{tree tensor network} takes the following form: 
	\begin{enumerate}[(i)]
		\item
		For each leaf  $\ell \in L(\bar \tau) = \{1,\ldots,d\}$, we set 
		$$X_\ell := \U_\ell \in \C^{n_\ell \times r_\ell}.  $$
		\item
		For each subtree $\tau = ( \tau_1, \dots, \tau_m) \in \mathcal T(\bar\tau)$,  
		we set 
		$n_\tau := \prod_{i=1}^m  n_{\tau_i}$ and
		\begin{align*}
		& X_\tau := C_\tau \bigtimes_{i=1}^m \U_{\tau_i} 
		\in \mathbb{C}^{n_{\tau_1} \times \dots \times n_{\tau_m} \times r_\tau},
		\\
		& \U_{\tau} := \mat_{1,\ldots,m}( X_\tau ) \in \mathbb{C}^{n_\tau \times r_\tau} \ .
		\end{align*}
	\end{enumerate}
\end{definition}

At the root $\bar \tau = ( \tau_1, \dots, \tau_m)$, we arrive at a tensor of the form $X_{\bar \tau} := C_{\bar \tau} \bigtimes_{i=1}^m \U_{\tau_i} \in \mathbb C^{n_{\tau_1}\times \cdots \times n_{\tau_m}}$, which is a reshape of the $d$th order tensor $X\in \mathbb{C}^{n_1\times \cdots \times n_d}$ represented by the entire tree tensor network. In particular, $U_{\bar \tau} \in \mathbb{C}^{n_1\cdots n_d}$ is the vectorization of both, $X_{\bar \tau}$ and $X$.
Tensor trains/matrix product states are tree tensor networks of maximal height, while tensors in Tucker format have height~$1$. Furthermore, we define the \emph{representation rank} of a tree tensor network as the largest $r_\tau$ for $\tau \in \mathcal T(\bar{\tau}) \cup L(\bar \tau)$, i.e., as the maximal representation rank appearing in the TTN.

Definition~\ref{def:ttn} recursively defines tree tensor networks via the Tucker format. For later purposes, it is helpful to recall the unfolding formula for the Tucker format from~\cite{KoB09}:
\begin{align}
    \mat_j \Big( C \bigtimes_{i=1}^m \U_{\tau_i} \Big) = \U_j \mat_j (C) \bigotimes_{i \neq j} \U_{\tau_i}^\top.\label{eq:unfolding_formula}
\end{align}
In the mathematical literature, tree tensor networks for binary trees have been studied as hierarchical tensors \cite{Ha12}, tensors in hierarchical
Tucker format \cite{KT14,G10} and with general trees as tensors in tree-based tensor format \cite{FaHN15,FaHN18}. In quantum chemistry, tree tensor networks are employed -- among others -- for the multilayer multiconfiguration time-dependent Hartree method (ML-MCTDH)~\cite{Manthe2008,VM2011}.

The following result relates the ranks of a tree tensor network with the ranks of its matricizations; it can be found in, e.g.,~\cite[Theorem 11.12 and Lemma 11.15]{H19}.
\begin{theorem} \label{thm:ttnbounds}
For a given dimension tree $\bar \tau$, a tensor $X\in \mathbb{C}^{n_1\times \cdots \times n_d}$ admits a tree tensor network representation with
\[
 r_\tau = \mathrm{rank}\big( \mat_{L(\tau)}(X) \big), \quad \forall \tau \in T(\bar \tau) \cup L(\bar \tau).
\]

\end{theorem}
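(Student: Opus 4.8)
The plan is to construct the representation explicitly by choosing, at every node $\tau$, a basis of the column space of the matricization $\mat_{L(\tau)}(X)$, and then to verify that these bases are compatible across the tree so that they assemble into transfer tensors. First I would set $U_\tau := \mathrm{range}\big(\mat_{L(\tau)}(X)\big) \subseteq \C^{n_\tau}$ for every $\tau \in \mathcal T(\bar\tau) \cup L(\bar\tau)$; by definition $\dim U_\tau = \rank(\mat_{L(\tau)}(X)) =: r_\tau$. For each leaf $\ell$ I would pick $\U_\ell \in \C^{n_\ell \times r_\ell}$ whose columns form a basis of $U_\ell$, and for each internal node a matrix $\U_\tau \in \C^{n_\tau \times r_\tau}$ whose columns form a basis of $U_\tau$. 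At the root, $\mat_{L(\bar\tau)}(X)$ is the single column $\vect(X)$, so $r_{\bar\tau}=1$ and $U_{\bar\tau} = \mathrm{span}\{\vect(X)\}$, consistent with the convention $r_{\bar\tau}=1$.

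The crucial step, and the one I expect to be the main obstacle, is the \emph{nestedness} property: for every subtree $\tau = (\tau_1,\dots,\tau_m)$ one has
\[
 U_\tau \subseteq U_{\tau_1} \otimes U_{\tau_2} \otimes \cdots \otimes U_{\tau_m}.
\]
To prove this I would argue at the level of individual columns of $\mat_{L(\tau)}(X)$. Such a column is obtained by fixing all indices outside $L(\tau)$, leaving a tensor indexed by the modes in $L(\tau) = L(\tau_1) \mathbin{\dot{\cup}}\cdots\mathbin{\dot{\cup}} L(\tau_m)$. Since $L(\tau_i) \subseteq L(\tau)$, the complement of $L(\tau_i)$ contains the complement of $L(\tau)$; hence every fiber of this tensor in the direction $L(\tau_i)$ (that is, fixing the remaining modes in $L(\tau)$ as well) is itself a column of $\mat_{L(\tau_i)}(X)$ and therefore lies in $U_{\tau_i}$. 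A tensor all of whose $\tau_i$-fibers lie in $U_{\tau_i}$ for every $i$ lies in $U_{\tau_1}\otimes\cdots\otimes U_{\tau_m}$, which gives the claim.

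With nestedness in hand, the transfer tensors are essentially forced. Because the columns of the Kronecker products $\U_{\tau_1}(:,i_1)\otimes\cdots\otimes\U_{\tau_m}(:,i_m)$ span $U_{\tau_1}\otimes\cdots\otimes U_{\tau_m} \supseteq U_\tau$, each basis column of $\U_\tau$ can be written as
\[
 \U_\tau(:,k) = \sum_{i_1,\dots,i_m} C_\tau(i_1,\dots,i_m,k)\, \U_{\tau_1}(:,i_1)\otimes\cdots\otimes\U_{\tau_m}(:,i_m),
\]
which defines $C_\tau \in \C^{r_{\tau_1}\times\cdots\times r_{\tau_m}\times r_\tau}$. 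Finally I would check, by induction over the tree from the leaves to the root, that the quantities $X_\tau$ and $\U_\tau$ produced by Definition~\ref{def:ttn} from these data coincide with the chosen bases; the identity $\U_\tau = \mat_{1,\dots,m}\big(C_\tau \bigtimes_{i=1}^m \U_{\tau_i}\big)$ is exactly the display above rewritten through the unfolding formula~\eqref{eq:unfolding_formula}. At the root this yields $\vect(X)$, so the network represents $X$ with the prescribed ranks. The matching lower bound $r_\tau \ge \rank(\mat_{L(\tau)}(X))$, needed for the claimed equality, follows because any recursive TTN construction factors $\mat_{L(\tau)}(X) = \U_\tau W_\tau^\top$ through the $r_\tau$ columns of $\U_\tau$, so no smaller rank is attainable.
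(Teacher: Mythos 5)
Your proposal is correct, and it is essentially the argument behind the paper's own treatment: the paper does not prove this theorem but cites \cite[Theorem 11.12 and Lemma 11.15]{H19}, where the proof proceeds exactly as you do — take the minimal subspaces $U_\tau = \mathrm{range}(\mat_{L(\tau)}(X))$, establish their nestedness $U_\tau \subseteq U_{\tau_1}\otimes\cdots\otimes U_{\tau_m}$, and read off the transfer tensors as the change-of-basis coefficients, with the reverse inequality coming from the factorization $\mat_{L(\tau)}(X) = \U_\tau W_\tau^\top$ inherent in any TTN representation.
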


\subsection{Tree tensor network operators (TTNO)} \label{sec:ttno}
Having established Definition~\ref{def:ttn} of a tree tensor network, we can now introduce the concept of a tree tensor network operator (TTNO).

\begin{definition}[TTNO] 
\label{def:TTNO}
    Let $\mathcal H: \mathbb C^{n_1\times \cdots \times n_d} \to \mathbb C^{n_1\times \cdots \times n_d}$ be a linear operator, and $\widehat H \in \mathbb C^{(n_1 \dots n_d) \times (n_1 \dots  n_d)}$ its associated linear map, i.e.,
    $ \vect(\mathcal H(X))= \widehat H \vect(X)$ for all $X \in \mathbb C^{n_1 \times \dots \times n_d}$.
   We define $\mathcal{H}$ as a tree tensor network operator (TTNO) of representation rank $r$ on a given tree  $\bar \tau$ if the reshape $H \in \mathbb{C}^{n_1^2 \times \cdots \times n_d^2}$ of $\widehat{H}$ forms a tree tensor network of representation rank $r$ on $\bar{\tau}$. The tree tensor network $H$ is the TTNO representation of $\mathcal{H}$.
\end{definition}

Definition~\ref{def:TTNO} imposes a tree tensor network structure on a specific reshape of the linear map associated with the operator. To illustrate this construction, we first consider one of the simplest examples for trees of height $1$, that is, determining a TTNO that aligns with the Tucker format. Consider a linear operator of the form:
\begin{equation} \label{eq:rank1operator}
     \mathcal{H}(X) =  X \bigtimes_{j=1}^d \A_j 
    \quad \text{with} \quad
    \A_j \in \mathbb C^{n_j \times n_j} 
    \, .
\end{equation}
Its matrix representation $\widehat H$ is retrieved by applying the unfolding formula (\ref{eq:unfolding_formula}),
\[
    \vect(\mathcal{H}( X) ) 
    = \mat_{1,\ldots,m}(\mathcal{H}(X)) = \underbrace{\left( \A_d \otimes \cdots \otimes \A_1 \right)}_{=\hat H} \vect(X)  \, .
\]
Following well-established techniques~\cite{KT14,Schollwock2011} that leverage matrix/tensor low-rank approximation for approximating operators, the Tucker operator representation of $H$ is obtained by reshaping $\text{vec}(\widehat{H})$ into an $n_1^2 \times \dots \times n_d^2$ tensor, resulting in the outer product representation
$$H = \vect(\A_1) \circ \dots \circ \vect(\A_d) \ \in \ \mathbb C^{n_1^2 \times \dots \times n_d^2}.$$
This corresponds to a rank-$1$ Tucker operator
$H = C \bigtimes_{i=1}^d \vect(\A_i) \in \mathbb C^{n_1^2 \times \dots \times n_d^2} $ with $C=1$. 
The described construction easily extends to a general dimension tree $\bar \tau$, by letting each basis matrix contain $\vect(\A_1)$ and setting each transfer tensor to $1$. One can also accommodate a sum of $r$ operators of the form~\eqref{eq:rank1operator}, using that the sum of $r$ rank-$1$ TTNOs is a rank-$r$ TTNO, but the obtained representation may not be optimal in terms of representation ranks and storage complexity.
%

To conclude, we recall~\cite[\S 8]{KT14} how to efficiently apply a linear operator $\mathcal{H}$ in TTNO representation $H$ to a tree tensor network~$X$ with the same dimension tree $\bar \tau$. The evaluation of $\mathcal{H}(X)$ results again in a tree tensor network (of larger rank) and it can be efficiently computed as follows:
\begin{enumerate}
	\item[(i)] For each leaf $\tau = \ell$, we set
	\begin{align*}
		\left( \mathcal{H}(X) \right)_l = \left[ \A_1^{\ell} \U_\ell, \dots, \A_s^\ell \U_\ell \right],
	\end{align*}
	where $\mathbf{A}_j^\ell \in \mathbb C^{n_\ell \times n_\ell}$ is the matrix obtained from reshaping the $j$th column of the basis matrix at the $\ell$th leaf of the TTNO $H$.

	\item[(ii)] For each subtree $\tau \in \mathcal T(\bar \tau)$, we set 
	\begin{align} \label{eq:kronTen}
		C_{\tau}^{\mathcal{H}(X)} = C_{\tau}^{H} \otimes C_{\tau}^{X}.
	\end{align}
        where $C_{\tau}^{H}$ denotes the transfer tensor of the TTNO representation $H$ at $\tau$, and $C_{\tau}^X$ denotes the transfer tensor of the TTN $X$ at $\tau$.
\end{enumerate}
The operation~\eqref{eq:kronTen} involves the Kronecker product of tensors. We recall that 
the tensor Kronecker product $C \in \C^{(n_1m_1)\times \dots \times (n_dm_d)}$ of tensors $A \in \C^{n_1\times \dots \times n_d}$ and $B \in \C^{m_1\times \dots \times m_d}$ is defined element-wise for $1\leq i_k \leq n_k$ and $1 \leq j_k \leq m_k$ via the relation
\begin{align*}
	C(j_1 + (i_1-1)m_1,\dots,j_d + (i_d-1)m_d) := A(i_1,\dots,i_d) B(j_1,\dots,j_d) \, .
\end{align*}
It is worth noting that the operations (i) and (ii) in the application of a TTNO are independent and can be performed in parallel. Moreover, the operation (ii) multiplies each rank of the original tensor network by the corresponding rank of the TTNO. Thus, a compact TTNO representation $H$, with small representation ranks, is of utmost interest.




\section{Construction of TTNOs for Hamiltonians} \label{sec:ttnounstruc}

We now focus on the Hamiltonian operator \eqref{eq:Ham_intro}, which consists of two parts. The first part, resembling a Laplacian operator, acts on individual sites. The second part describes interactions between pairs of sites. Because the treatment of the first part is well-established~\cite{KT14}, we focus on the second part, which poses additional challenges. 

To be specific, we investigate the TTNO representation of Hamiltonians having a matrix representation of the form
\begin{equation} \label{eq:hamiltonian}
	\widehat H = \sum_{1\le i < j \le d} \beta(i,j) \cdot \A^{(i)} \A^{(j)} \in \mathbb C^{(n_1\cdots n_d) \times (n_1\cdots n_d)},
\end{equation}
with $\A^{(k)} = \I_{n_d} \otimes \dots  \otimes \I_{n_{k+1}} \otimes \A_k \otimes \I_{n_{k-1}} \otimes\dots  \otimes \I_{n_{1}}$ and $\A_k \in \mathbb C^{n_k\times n_k}$ for $k  = 1, \dots, d$. The strictly upper triangular interaction matrix $\boldsymbol{\beta} = (\beta(i,j))_{i,j=1}^{d}$ captures the strength of interaction between pairs of sites. Often, $\beta(i,j) = f(|i-j|)$ for $i < j$ and a given function $f(\cdot)$;
the entries in the lower triangular part of $\boldsymbol{\beta}$ are set to zero.

To simplify the presentation, we primarily consider  binary dimension trees $\bar \tau$, i.e., each subtree of $\bar \tau$ takes the form
$\tau = (\tau_1,\tau_2)$. In Section~\ref{sec:generaltrees}, we will briefly discuss the extension 
to general trees. 

\subsection{Construction of TTNOs: Unstructured case} \label{subsec:unstruct_case}

The most straightforward approach to derive a TTNO representation of \eqref{eq:hamiltonian} follows from the observation that each summand has rank one. Summing up the TTNOs for each term thus results in a TTNO with representation rank $d(d-1)/2$; see Section~\ref{sec:ttno}. This rank is far from optimal; indeed, we will show that it can always be reduced to $\mathcal O(d)$, without imposing any assumption on the interaction matrix $\boldsymbol{\beta}$. 

To obtain a TTNO representation $H \in \mathbb C^{n^2_1 \times \cdots \times n^2_d}$ for~\eqref{eq:hamiltonian}, we start by vectorizing the summands:
\[
 \bh := \vect(H) = \sum_{1\le i < j \le d} \beta(i,j) \cdot \ba_{}^{(i,j)} \in \mathbb C^{n^2_1\cdots n^2_d}.
\]
where
\begin{align} 
\ba^{(i,j)} & := \be_{d} \otimes \cdots  \otimes \be_{{j+1}} \otimes \vect(\A_j) \otimes \be_{{j-1}} \otimes\cdots  \otimes  \be_{{i+1}} \otimes \vect(\A_i) \otimes \be_{{i-1}} \otimes\cdots  \otimes \be_{{1}}, \label{eq:aijsanstau}
\end{align}
and $\be_k = \vect(\I_{n_k})$ denotes the vectorization of the identity matrix. More generally, given an arbitrary  tree $\tau$  and $i,j \in L(\tau)$ with $i< j$, we define the one- and two-site matrix representations as follows:
\begin{align} 
\label{eq:defai} 
\ba_{\tau}^{(i)} &= \bigotimes_{\ell \in L(\tau) \atop \ell > i} \be_{\ell}  \otimes \vect(\A_i)  \bigotimes_{\ell \in L(\tau) \atop \ell < i} \be_{\ell},
\\
\label{eq:defaij}
  \ba_{\tau}^{(i,j)} &= \bigotimes_{\ell \in L(\tau) \atop \ell > j} \be_{\ell}  \otimes \vect(\A_j) \bigotimes_{\ell \in L(\tau) \atop i < \ell < j}^{i+1} \be_{\ell} \otimes \vect(\A_i)  \bigotimes_{\ell \in L(\tau) \atop \ell < i} \be_{\ell},
\end{align}
with the understanding that the Kronecker products are executed in decreasing order with respect to $\ell$. Note that $\ba_{\tau}^{(i,j)}$ matches~\eqref{eq:aijsanstau} for $\tau = \bar \tau$.

Given a subtree $\tau \in \mathcal T(\bar\tau)$, let us consider the part of the Hamiltonian~\eqref{eq:hamiltonian} that contains all interactions between sites within $L(\tau)$. Following the discussion above, the (vectorized) Hamiltonian for $\tau$ takes the form
\begin{equation} \label{eq:htau}
 \bh_\tau := \sum_{i < j \atop i,j\in L(\tau)} \beta(i,j) \cdot \ba_{\tau}^{(i,j)} \in \mathbb C^{n_\tau^2},
\end{equation}
with $n_\tau = \prod_{i \in L(\tau)} n_i$.
Note that the definition~\eqref{eq:defaij} of $\ba_{\tau}^{(i,j)}$ ensures that the single-site operators $\A_i$ and $\A_j$ continue to act on the original sites $i$ and $j$, respectively.
For $\bar \tau = \tau$, all leaves are included and $\bh_{\bar \tau} = \bh$.
For a leaf $\tau = \ell \in \{1, \dots, d\}$, we set $\bh_\ell = \textbf{0}$.
 
The following lemma is a simple but crucial observation that will allow us to construct TTNO representations recursively.
To simplify the notation, we introduce the compact notation  
$\be_{\tau} := \bigotimes_{\ell \in L(\tau)} \be_{\ell}$.
for an arbitrary tree $\tau$.
\begin{lemma} \label{lemma:part} For $\tau = (\tau_1,\tau_2) \in \mathcal T(\bar\tau)$, the vector $\bh_\tau$ defined in~\eqref{eq:htau} satisfies
\begin{equation} \label{eq:binaryrecursion}
 \bh_\tau = \be_{{\tau_2}} \otimes \bh_{\tau_1} + \bh_{\tau_2} \otimes \be_{{\tau_1}}
 +\sum_{i \in L(\tau_1)\atop j \in L(\tau_2)} \beta(i,j) \cdot \ba_{\tau_2}^{(j)} \otimes \ba_{\tau_1}^{(i)},
\end{equation}
with $\ba_{\tau_1}^{(i)}, \ba_{\tau_2}^{(j)}$ defined as in~\eqref{eq:defai}.
\end{lemma}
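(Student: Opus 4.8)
The plan is to prove the identity~\eqref{eq:binaryrecursion} by partitioning the sum defining $\bh_\tau$ according to where the two interacting sites $i < j$ lie relative to the split $L(\tau) = L(\tau_1) \mathbin{\dot{\cup}} L(\tau_2)$. Since $\tau = (\tau_1,\tau_2)$ and condition~\eqref{eq:partitionrel} forces every index in $L(\tau_1)$ to be strictly smaller than every index in $L(\tau_2)$, each pair $(i,j)$ with $i<j$ and $i,j \in L(\tau)$ falls into exactly one of three cases: (a) both $i,j \in L(\tau_1)$; (b) both $i,j \in L(\tau_2)$; or (c) $i \in L(\tau_1)$ and $j \in L(\tau_2)$. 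I would split the sum in~\eqref{eq:htau} accordingly into three partial sums $S_1$, $S_2$, and $S_3$, and show that they equal the three terms on the right-hand side of~\eqref{eq:binaryrecursion}.

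The main technical content is to verify that the Kronecker-product factorizations match up, and here the key observation is how $\ba_\tau^{(i,j)}$ factors across the split. First I would establish the compatibility of the definitions~\eqref{eq:defai} and~\eqref{eq:defaij} with the block structure: writing $\be_\tau = \be_{\tau_2} \otimes \be_{\tau_1}$ (which follows directly from $\be_\tau := \bigotimes_{\ell \in L(\tau)} \be_\ell$ and the ordering~\eqref{eq:partitionrel}), I would check that for case (a), where $i,j \in L(\tau_1)$, one has $\ba_\tau^{(i,j)} = \be_{\tau_2} \otimes \ba_{\tau_1}^{(i,j)}$; for case (b), where $i,j \in L(\tau_2)$, one has $\ba_\tau^{(i,j)} = \ba_{\tau_2}^{(i,j)} \otimes \be_{\tau_1}$; and for case (c) one has $\ba_\tau^{(i,j)} = \ba_{\tau_2}^{(j)} \otimes \ba_{\tau_1}^{(i)}$. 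Each of these is a routine bookkeeping check: the Kronecker product in~\eqref{eq:defaij} is executed in decreasing order of $\ell$, so the factors indexed by $L(\tau_2)$ (the larger indices) sit to the left and the factors indexed by $L(\tau_1)$ sit to the right, exactly splitting the product at the interface between the two leaf sets.

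Granting these three factorizations, the recursion falls out by pulling the common factors out of each partial sum. In case (a), the factor $\be_{\tau_2}$ is independent of $(i,j)$, so $S_1 = \be_{\tau_2} \otimes \sum_{i<j,\, i,j\in L(\tau_1)} \beta(i,j)\, \ba_{\tau_1}^{(i,j)} = \be_{\tau_2} \otimes \bh_{\tau_1}$ by definition~\eqref{eq:htau} applied to $\tau_1$. Symmetrically, $S_2 = \bh_{\tau_2} \otimes \be_{\tau_1}$. In case (c), no further collapsing is possible since $i$ ranges over $L(\tau_1)$ and $j$ over $L(\tau_2)$ independently, so $S_3 = \sum_{i \in L(\tau_1),\, j \in L(\tau_2)} \beta(i,j)\, \ba_{\tau_2}^{(j)} \otimes \ba_{\tau_1}^{(i)}$, which is precisely the last term of~\eqref{eq:binaryrecursion}. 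Summing $S_1 + S_2 + S_3$ gives the claimed identity.

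I expect the only real obstacle to be the bookkeeping in the case analysis, in particular making the index ranges in the Kronecker products of~\eqref{eq:defai} and~\eqref{eq:defaij} line up with the sub-leaf-sets $L(\tau_1)$ and $L(\tau_2)$; this is where the ordering hypothesis~\eqref{eq:partitionrel} is used crucially, since it guarantees that ``all of $L(\tau_1)$ then all of $L(\tau_2)$'' is consistent with the global decreasing-$\ell$ convention and hence that the tensor factors separate cleanly. There is also a minor subtlety in case (c) that the single-site vectors $\vect(\A_i)$ and $\vect(\A_j)$ end up correctly placed within $\ba_{\tau_1}^{(i)}$ and $\ba_{\tau_2}^{(j)}$ respectively, padded by identity vectors $\be_\ell$ on the appropriate sides; but this follows immediately once the factorization $\ba_\tau^{(i,j)} = \ba_{\tau_2}^{(j)} \otimes \ba_{\tau_1}^{(i)}$ is read off from~\eqref{eq:defaij}. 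No convergence or analytic estimates are involved, so the proof is purely combinatorial and the argument is complete once the three factorizations are verified.
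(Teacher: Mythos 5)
Your proposal is correct and takes essentially the same route as the paper's proof: both partition the index pairs according to the split $L(\tau) = L(\tau_1) \mathbin{\dot{\cup}} L(\tau_2)$, verify the Kronecker factorizations of $\ba_\tau^{(i,j)}$ in each case (e.g.\ $\ba_\tau^{(i,j)} = \be_{\tau_2}\otimes\ba_{\tau_1}^{(i,j)}$ when $i,j\in L(\tau_1)$, and $\ba_\tau^{(i,j)} = \ba_{\tau_2}^{(j)}\otimes\ba_{\tau_1}^{(i)}$ in the mixed case), and then collapse the partial sums into the three terms of~\eqref{eq:binaryrecursion}. The only cosmetic difference is that the paper partitions $L(\tau)\times L(\tau)$ into four blocks and observes that the block $L(\tau_2)\times L(\tau_1)$ contributes nothing because no pair there satisfies $i<j$, whereas you exclude that case up front using the ordering condition~\eqref{eq:partitionrel}.
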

\begin{proof} 
The disjoint union $L(\tau) = L(\tau_1) \mathbin{\dot{\cup}} L(\tau_2)$ induces the partition 
\begin{equation} \label{eq:subsets}
 L(\tau)\times L(\tau) = L(\tau_1)\times L(\tau_1) \cup L(\tau_2)\times L(\tau_2) \cup L(\tau_1)\times L(\tau_2) \cup L(\tau_2)\times L(\tau_1).
\end{equation}
We now consider the corresponding division of the sum~\eqref{eq:htau} defining $\bh_\tau$. The first subset only considers terms in the sum for which $(i,j)\in L(\tau_1) \times L(\tau_1)$. Using that~\eqref{eq:defai}
implies $\ba_{\tau}^{(i,j)} = \be_{\tau_2} \otimes \ba_{\tau_1}^{(i,j)}$ for $i,j \in L(\tau_1)$, we obtain for this part of the Hamiltonian that
\[
\sum_{i < j \atop i,j\in L(\tau_1)} \beta(i,j) \cdot \ba_{\tau}^{(i,j)} = 
\be_{\tau_2} \otimes \sum_{i < j \atop i,j\in L(\tau_1)} \beta(i,j) \cdot \ba_{\tau_1}^{(i,j)} = \be_{{\tau_2}} \otimes \bh_{\tau_1}\, , 
\]
matching the first term in~\eqref{eq:binaryrecursion}.
Similarly, the second subset $L(\tau_2)\times L(\tau_2)$ in~\eqref{eq:subsets} yields the second term $\bh_{\tau_2} \otimes \be_{{\tau_1}}$ in~\eqref{eq:binaryrecursion}. The third subset in~\eqref{eq:subsets} directly corresponds to the third term in~\eqref{eq:binaryrecursion}, while the fourth subset does not contribute any terms because no index pair $(i,j)\in L(\tau_2) \times L(\tau_1)$ satisfies the condition $i < j$.
\end{proof}

For theoretical purposes, the following generalization of Lemma~\ref{lemma:part} is of interest, which splits the tree into a subtree (or leaf) $\tau$ and its complement $\bar \tau \setminus \tau$, which is the tree that is obtained by removing the subtree $\tau$ from $\bar \tau$. Compared to the setting of Lemma~\ref{lemma:part}, this situation is more complicated because the leaves of $\tau$ are not necessarily smaller than the leaves of $\bar \tau$. To conveniently account for this, we introduce the symmetrized interaction matrix
\begin{equation} \label{eq:symmbeta}
 \boldsymbol{\beta}_s = \boldsymbol{\beta}+\boldsymbol{\beta}^\top,
\end{equation}
that is, $\boldsymbol{\beta}_s(i,j) = \boldsymbol{\beta}(i,j)$ for $i < j$, $\boldsymbol{\beta}_s(i,j) = \boldsymbol{\beta}(j,i)$ for $i > j$, and $\boldsymbol{\beta}_s(i,i) = 0$.

\begin{lemma} \label{lemma:split}
 For $\tau \in \mathcal T(\bar\tau) \cup L(\bar\tau)$, consider a permutation of the modes that puts the leaves of $\tau$ first:
 \[
  \underbrace{\ell_1,\ell_1 + 1,\ldots,\ell_2-1}_{L(\tau)},\underbrace{1,2,\ldots,\ell_1-1,\ell_2,\ell_2 + 1,\ldots,d}_{L(\bar \tau \setminus \tau)}.
 \]
 Let $\widehat H_{\tau,\bar \tau\setminus \tau}$ denote the corresponding permutation of the matrix representation $\widehat H$ from~\eqref{eq:hamiltonian}. Then the vectorization of $\widehat H_{\tau,\bar \tau\setminus \tau}$
 takes the form
\begin{equation} \label{eq:splitting}
 \bh_{\tau,\bar \tau\setminus \tau}  = \be_{{\bar \tau \setminus \tau}} \otimes \bh_{\tau} + \bh_{\bar \tau \setminus \tau} \otimes \be_{{\tau}}
 +\sum_{i \in L(\tau)\atop j \in L(\bar \tau \setminus \tau)} \beta_s(i,j) \cdot \ba_{\bar \tau \setminus \tau}^{(j)} \otimes  \ba_{\tau}^{(i)}  \in \C^{  n_\tau^2 \cdot n_{\bar \tau \setminus \tau}^2},
\end{equation}
where $\bh_{\bar \tau \setminus \tau} \in  \C^{n_{\bar \tau \setminus \tau}^2}$ is the vectorization of the Hamiltonian that only considers interactions between sites contained in $L(\bar \tau \setminus \tau)$ 
and $n_{\bar \tau \setminus \tau} = \prod_{j \in L(\bar \tau \setminus \tau)} n_j^2$. The vectors
$\ba_{\tau}^{(i)}$, $\ba_{\bar \tau \setminus \tau}^{(j)}$ are defined as in~\eqref{eq:defai}.
\end{lemma}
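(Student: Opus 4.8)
The plan is to mirror the proof of Lemma~\ref{lemma:part}, partitioning the pairs $(i,j)$ appearing in the defining sum~\eqref{eq:hamiltonian} according to whether the indices lie in $L(\tau)$ or in $L(\bar\tau\setminus\tau)$, and then identifying each group with one of the three terms on the right-hand side of~\eqref{eq:splitting}. Concretely, $\{(i,j): 1\le i<j\le d\}$ splits into pairs with both indices in $L(\tau)$, pairs with both indices in $L(\bar\tau\setminus\tau)$, and the mixed pairs. Since the permutation moves the modes of $L(\tau)$ to the front, I first record how a single summand transforms under this reordering: permuting the modes conjugates $\widehat H$ by a perfect-shuffle permutation, which reorders the Kronecker factors, so vectorizing the permuted summand $\A^{(i)}\A^{(j)}$ collects the factors indexed by $L(\tau)$ on the right and those indexed by $L(\bar\tau\setminus\tau)$ on the left. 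Throughout, the vectors $\ba_\tau^{(\cdot)}$, $\ba_{\bar\tau\setminus\tau}^{(\cdot)}$ and $\bh_{\bar\tau\setminus\tau}$ are read off definitions~\eqref{eq:defai} and~\eqref{eq:htau} with respect to the orderings induced on $L(\tau)$ and on $L(\bar\tau\setminus\tau)$ after the permutation.

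For the two diagonal groups I would argue exactly as in Lemma~\ref{lemma:part}. For a pair $i,j\in L(\tau)$, definition~\eqref{eq:defai} together with the reordering yields $\ba_\tau^{(i,j)}$ tensored on the left with $\be_{\bar\tau\setminus\tau}$ (the identity factors over the complementary modes), so summing over such pairs produces $\be_{\bar\tau\setminus\tau}\otimes\bh_\tau$. Symmetrically, the pairs $i,j\in L(\bar\tau\setminus\tau)$ assemble into $\bh_{\bar\tau\setminus\tau}\otimes\be_\tau$; here I should note that the permutation preserves the relative order of the two blocks $\{1,\ldots,\ell_1-1\}$ and $\{\ell_2,\ldots,d\}$ making up $L(\bar\tau\setminus\tau)$, so the condition $i<j$ is inherited and no symmetrization is needed within this group.

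The mixed group is where the symmetrized matrix $\boldsymbol{\beta}_s$ enters, and this is the step I expect to require the most care. Unlike in Lemma~\ref{lemma:part}, where the split $(\tau_1,\tau_2)$ respected the global ordering and only one of the two off-diagonal blocks contributed, here $L(\tau)=\{\ell_1,\ldots,\ell_2-1\}$ is an interior block, so cross pairs arise in two flavors: $(i,j)$ with $i\in L(\tau)$ and $j\ge\ell_2$, and $(i,j)$ with $i<\ell_1$ and $j\in L(\tau)$. In the first flavor the $\tau$-index is the smaller one and contributes $\beta(i,j)$; in the second the $\tau$-index is the larger one. Reindexing the second flavor by renaming the $\tau$-index to $i$ and the complementary index to $j$ (so that $j<i$), and using $\beta_s(i,j)=\beta(j,i)$ there while $\beta_s(i,j)=\beta(i,j)$ in the first flavor, I would merge both flavors into the single sum $\sum_{i\in L(\tau),\,j\in L(\bar\tau\setminus\tau)}\beta_s(i,j)\,\ba_{\bar\tau\setminus\tau}^{(j)}\otimes\ba_\tau^{(i)}$. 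The essential check is that, once the $L(\tau)$ modes are collected first, both flavors produce exactly the factorized form $\ba_{\bar\tau\setminus\tau}^{(j)}\otimes\ba_\tau^{(i)}$ irrespective of the original relative order of the two indices, because $\A_i$ and $\A_j$ then sit in their respective blocks while every remaining mode carries $\be_\ell$. Collecting the three groups yields~\eqref{eq:splitting}, and the leaf case $\tau=\ell$ follows by the same argument with $\bh_\tau=\textbf{0}$.
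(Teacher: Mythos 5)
Your proposal is correct and takes essentially the same route as the paper's proof: both split the index pairs into the two diagonal groups, which reproduce the first two terms exactly as in Lemma~\ref{lemma:part}, and the two mixed groups, which are merged into a single sum over $L(\tau)\times L(\bar\tau\setminus\tau)$ by reindexing the second flavor and invoking the symmetrized matrix $\boldsymbol{\beta}_s$ together with $\beta(i,j)=0$ for $i\ge j$. The only difference is that you make explicit some bookkeeping the paper leaves implicit (the shuffle of Kronecker factors under the permutation and the preserved relative order within $L(\bar\tau\setminus\tau)$), which is a harmless and indeed welcome addition.
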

\begin{proof}
The proof follows along the lines of Lemma~\ref{lemma:part} using, in analogy to~\eqref{eq:subsets}, the partition induced by the disjoint union 
$L(\tau) = L(\tau) \mathbin{\dot{\cup}} L(\bar \tau \setminus \tau)$:
\[
 \{1,\ldots,d\} \times \{1,\ldots,d\} = L(\tau)\times L(\tau) \cup L(\bar \tau\setminus \tau)\times L(\bar \tau\setminus \tau) \cup L(\tau)\times L(\bar \tau\setminus \tau) \cup L(\bar \tau\setminus \tau)\times L(\tau).
\]
As explained in the proof of Lemma~\ref{lemma:part}, the first two terms of this partition match the first two terms of~\eqref{eq:splitting}. Using that $\beta(i,j) = 0$ for $i\ge j$, the last two terms give rise to
\begin{align*}
 &\sum_{i \in L(\tau), j \in L(\bar \tau \setminus \tau), i < j} \beta(i,j) \cdot \ba_{\bar \tau \setminus \tau}^{(j)} \otimes  \ba_{\tau}^{(i)} 
 + \sum_{i \in L(\bar \tau \setminus \tau), j \in L(\tau), i < j} \beta(i,j) \cdot \ba_{\bar \tau \setminus \tau}^{(i)} \otimes  \ba_{\tau}^{(j)}  \\
 =& \sum_{i \in L(\tau), j \in L(\bar \tau \setminus \tau), i < j} \beta(i,j) \cdot \ba_{\bar \tau \setminus \tau}^{(j)} \otimes  \ba_{\tau}^{(i)} 
 + \sum_{i \in L(\tau), j \in L(\bar \tau \setminus \tau), i > j} \beta(j,i) \cdot \ba_{\bar \tau \setminus \tau}^{(j)} \otimes  \ba_{\tau}^{(i)}  \\
 = & \sum_{i \in L(\tau), j \in L(\bar \tau \setminus \tau)} \beta_s(i,j) \cdot \ba_{\bar \tau \setminus \tau}^{(j)} \otimes  \ba_{\tau}^{(i)}.
\end{align*}

\end{proof}

A consequence of Lemma~\ref{lemma:split}, Theorem~\ref{thm:TTNO_unstruct} below is one of the main results of this work. It establishes a TTNO (representation) rank that grows linearly with $d$, instead of the quadratic growth attained by the naive construction mentioned in the beginning of this section. For a balanced binary tree $\bar \tau$, the TTNO rank is bounded by $\lfloor d/2 \rfloor+2$.
\begin{theorem} \label{thm:TTNO_unstruct}
    Let $\widehat H$ be the linear operator defined by \eqref{eq:hamiltonian} and let $\bar \tau$ be a binary dimension tree.
    Then $\widehat H$  admits a TTNO representation $H$ such that the ranks $r_\tau$ satisfy $r_\ell = 2$ at every leaf $\ell \in L(\bar \tau)$ and 
    \begin{equation} \label{eq:rtausubmatrix}
     r_\tau = 2 + \rank( \boldsymbol{\beta}_s(\tau,\bar \tau \setminus \tau) ) \le 2 + d_\tau, \quad \forall \tau \in T(\bar \tau) \setminus \bar \tau,
    \end{equation}
    where $d_\tau$ denotes the cardinality of $L(\tau)$ and $\boldsymbol{\beta}_s(\tau,\bar \tau \setminus \tau)$ is the $d_\tau \times (d-d_\tau)$ submatrix obtained by selecting the rows in $L(\tau)$ and the columns in $\{1,\ldots,d\} \setminus L(\tau)$ of the symmetrized interaction matrix $\boldsymbol{\beta}_s$ from~\eqref{eq:symmbeta}.
\end{theorem}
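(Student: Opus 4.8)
The plan is to reduce the statement to a rank computation for the matricizations of the reshaped tensor $H$ and then to read off these ranks directly from the splitting in Lemma~\ref{lemma:split}. By Theorem~\ref{thm:ttnbounds}, $H$ admits a tree tensor network representation whose rank at each node $\tau$ equals $\rank(\mat_{L(\tau)}(H))$, so it suffices to show $\rank(\mat_{L(\tau)}(H)) = 2 + \rank(\boldsymbol{\beta}_s(\tau,\bar\tau\setminus\tau))$ for internal subtrees and $\rank(\mat_{\{\ell\}}(H)) = 2$ at the leaves. The key observation is that, up to permutations of rows and columns (which leave the rank unchanged), the matricization $\mat_{L(\tau)}(H)$ coincides with the reshape of the vector $\bh_{\tau,\bar\tau\setminus\tau}$ from~\eqref{eq:splitting} that collects the $L(\tau)$-modes as rows and the $L(\bar\tau\setminus\tau)$-modes as columns. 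Under this reshape each Kronecker product $\bp\otimes\bq$ appearing in~\eqref{eq:splitting}, with $\bq$ built from the $\tau$-factors and $\bp$ from the complementary factors, becomes the rank-one matrix $\bq\,\bp^\top$.

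Applying this reshape term by term to~\eqref{eq:splitting}, the first two summands turn into the rank-one matrices $\bh_\tau\,\be_{\bar\tau\setminus\tau}^\top$ and $\be_\tau\,\bh_{\bar\tau\setminus\tau}^\top$, while the interaction sum collapses into $V_\tau\,\boldsymbol{\beta}_s(\tau,\bar\tau\setminus\tau)\,W_\tau^\top$, where $V_\tau = [\ba_\tau^{(i)}]_{i\in L(\tau)}$ and $W_\tau = [\ba_{\bar\tau\setminus\tau}^{(j)}]_{j\in L(\bar\tau\setminus\tau)}$ collect the one-site vectors from~\eqref{eq:defai} as columns. This gives
\[
 \mat_{L(\tau)}(H) = \bh_\tau\,\be_{\bar\tau\setminus\tau}^\top + \be_\tau\,\bh_{\bar\tau\setminus\tau}^\top + V_\tau\,\boldsymbol{\beta}_s(\tau,\bar\tau\setminus\tau)\,W_\tau^\top,
\]
from which the upper bound $\rank(\mat_{L(\tau)}(H)) \le 2 + \rank(\boldsymbol{\beta}_s(\tau,\bar\tau\setminus\tau)) \le 2 + d_\tau$ is immediate, the last inequality because $\boldsymbol{\beta}_s(\tau,\bar\tau\setminus\tau)$ has only $d_\tau$ rows. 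For a leaf $\tau=\ell$ one has $\bh_\ell=\mathbf{0}$ and $V_\ell=\vect(\A_\ell)$, so the identity reduces to a sum of two rank-one matrices and yields $r_\ell\le 2$.

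To upgrade these bounds to equalities I would introduce a grading of the ambient space by the number of $\vect(\A)$-factors. Grouping each mode $\ell\in L(\tau)$ into the two-dimensional subspace spanned by $\be_\ell=\vect(\I_{n_\ell})$ and $\vect(\A_\ell)$ --- linearly independent exactly when $\A_\ell$ is not a scalar multiple of the identity --- the vector $\be_\tau$ carries zero $\vect(\A)$-factors, each column of $V_\tau$ carries exactly one, and $\bh_\tau$ is a combination of two-factor terms. These grades are linearly independent subspaces, so for an internal node the three contributions $\be_\tau$, the range of $V_\tau\,\boldsymbol{\beta}_s(\tau,\bar\tau\setminus\tau)$ (of dimension $\rank(\boldsymbol{\beta}_s(\tau,\bar\tau\setminus\tau))$ once $V_\tau$ and $W_\tau$ have full column rank), and $\bh_\tau$ add up to a column space of dimension exactly $2+\rank(\boldsymbol{\beta}_s(\tau,\bar\tau\setminus\tau))$. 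At a leaf the grade-two part is absent and the grade-one part is one-dimensional, leaving exactly the two directions $\be_\ell$ and $\vect(\A_\ell)$, hence $r_\ell=2$.

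The main obstacle is precisely this independence bookkeeping. One must verify that $V_\tau$ and $W_\tau$ have full column rank --- which again reduces to the linear independence of $\vect(\I_{n_\ell})$ and $\vect(\A_\ell)$, since $\ba_\tau^{(i)}$ would otherwise collapse onto $\be_\tau$ --- and that the row factors $\be_{\bar\tau\setminus\tau}^\top$, $\bh_{\bar\tau\setminus\tau}^\top$ and $\boldsymbol{\beta}_s(\tau,\bar\tau\setminus\tau)\,W_\tau^\top$ do not conspire to cancel the leading and trailing rank-one terms. Under the natural assumption that no $\A_\ell$ is a scalar multiple of the identity, the grading makes all these contributions land in distinct sectors, so no cancellation can occur and the claimed equalities follow.
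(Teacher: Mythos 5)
Your core argument coincides with the paper's own proof: reduce to matricization ranks via Theorem~\ref{thm:ttnbounds}, use Lemma~\ref{lemma:split} to obtain the three-term expression for $\mat_{L(\tau)}(H)$ (the paper's \eqref{eq:matltau}), bound the rank of the interaction term by factoring it as $V_\tau\,\boldsymbol{\beta}_s(\tau,\bar\tau\setminus\tau)\,W_\tau^\top$, and observe that $\bh_\ell=\mathbf{0}$ at the leaves. Where you depart from the paper is in trying to upgrade the bound to an \emph{equality} of matricization ranks via the grading by the number of $\vect(\A_\ell)$-factors. Two remarks on that. First, it is unnecessary: the theorem asserts the existence of a TTNO \emph{representation} with the stated ranks, and representation ranks, unlike matricization ranks, can always be increased by padding basis matrices with zero columns and transfer tensors with zero slices; hence the upper bound alone already yields the statement, which is also all the paper's proof establishes. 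Second, if one does insist on equality of the minimal (matricization) ranks, your hypothesis that no $\A_\ell$ is a scalar multiple of $\I_{n_\ell}$ is not sufficient: you also need $\bh_\tau\neq\mathbf{0}$ and $\bh_{\bar\tau\setminus\tau}\neq\mathbf{0}$, since otherwise the first or second rank-one term in \eqref{eq:matltau} vanishes. For example, take $d=4$, a balanced tree, and $\boldsymbol{\beta}$ supported on the single entry $\beta(1,3)=1$; for $\tau$ the subtree with leaves $\{1,2\}$, both $\bh_\tau$ and $\bh_{\bar\tau\setminus\tau}$ are zero and $\rank\big(\mat_{L(\tau)}(H)\big)=1$, not $2+\rank(\boldsymbol{\beta}_s(\tau,\bar\tau\setminus\tau))=3$. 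The grading mechanism itself is sound --- when the column spaces and the row spaces of the three summands each lie in complementary graded sectors, ranks do add --- but the nonvanishing of the intra-$\tau$ and intra-complement Hamiltonian pieces is an additional assumption your sketch silently uses.
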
 
\begin{proof} By Definition~\ref{def:TTNO} and Theorem~\ref{thm:ttnbounds}, $H \in \C^{n_1^2\times \cdots \times n_d^2}$ admits a TTNO with the rank $r_\tau$ given by the rank of the matricization $\mat_{L(\tau)}(H)$, which maps the modes in $L(\tau)$ to row indices and the other modes to column indices. By Lemma~\ref{lemma:split}, we have that
\begin{equation} \label{eq:matltau}
 \mat_{L(\tau)}(H) = \bh_{\tau} \be_{{\bar \tau \setminus \tau}}^\top + \be_{{\tau}} \bh^\top_{\bar \tau \setminus \tau} 
 +\sum_{i \in L(\tau)\atop j \in L(\bar \tau \setminus \tau)} \beta_s(i,j) \cdot \ba_{\tau}^{(i)} \big( \ba_{\bar \tau \setminus \tau}^{(j)} \big)^\top.
\end{equation}
For a leaf $\tau = \ell$, this simplifies to 
$\ba_{\ell} \be_{{\bar \tau \setminus \ell}}^\top + \be_{\ell} \bh^\top_{\bar \tau \setminus \tau} 
 +\sum_{j \not= \ell} \beta_s(\ell,j) \cdot \ba_{\ell} \big( \ba_{\bar \tau \setminus \ell}^{(j)} \big)^\top$, which clearly has rank at most $2$, thus establishing $r_\ell = 2$.
 For a subtree $\tau \in T(\bar\tau)$ we can rewrite the third term in~\eqref{eq:matltau} as a matrix product of three matrices, the matrix containing the columns $\ba_{\tau}^{(i)}$, the 
 matrix $\boldsymbol{\beta}_s(\tau,\bar \tau \setminus \tau)$, and the matrix containing the rows $\big( \ba_{\bar \tau \setminus \tau}^{(j)} \big)^\top$. As the rank of this matrix product is bounded by 
 the rank of $\boldsymbol{\beta}_s(\tau,\bar \tau \setminus \tau)$, this establishes~\eqref{eq:rtausubmatrix}.
\end{proof}

We conclude this section by providing the explicit construction of basis matrices and transfer tensor for a TTNO that achieves the upper bound $r_\tau = 2 + d_\tau$ of Theorem~\ref{thm:TTNO_unstruct} for an unstructured interaction matrix $\boldsymbol{\beta}$. For this purpose, let us first consider $\tau = (\tau_1, \tau_2) \in \mathcal{T}(\bar{\tau}) \setminus \{ \bar \tau\}$, where neither $\tau_1$ nor $\tau_2$ is a leaf. Enumerating the leaves $L(\tau_1) = \{\ell_1, \ldots, \ell_2-1\}$ and $L(\tau_2) = \{\ell_2, \ldots, \ell_3-1\}$,  we introduce
%
\begin{align*}
&\U_{\tau_1} := \begin{bmatrix}
                   \be_{\tau_1} & \bh_{\tau_1} & \ba_{\tau_1}^{(\ell_1)} & \cdots & \ba_{\tau_1}^{(\ell_2-1)}
                  \end{bmatrix} \in \C^{n_{\tau_1}^2 \times (2 +d_{\tau_1})}, \\
&\U_{\tau_2} := \begin{bmatrix}
                   \be_{\tau_2} & \bh_{\tau_2} & \ba_{\tau_2}^{(\ell_2)} & \cdots & \ba_{\tau_2}^{(\ell_3-1)}
                  \end{bmatrix} \in \C^{n_{\tau_2}^2 \times (2 +d_{\tau_2})}.
\end{align*}
It follows from the result~\eqref{eq:binaryrecursion} of Lemma~\ref{lemma:part} that each column in the corresponding matrix $\U_{\tau}$ for $\tau = (\tau_1,\tau_2)$ can be obtained as a linear combination of Kronecker products between columns of $\U_{\tau_2}$ and columns of $\U_{\tau_1}$.
In other words, there exists a transfer tensor 
$C_{\tau} \in \C^{(2+d_{\tau_1}) \times (2+d_{\tau_2} )  \times (2+d_{\tau})}$ such that  
\[
 \U_{\tau} = \begin{bmatrix}
                   \be_{\tau} & \bh_{\tau} & \ba_\tau^{(\ell_1)} & \cdots & \ba_\tau^{(\ell_3-1)}
                  \end{bmatrix} = (\U_{\tau_2}\otimes \U_{\tau_1}) \mat_{1,2}(C_{\tau}) \in \C^{n_{\tau}^2 \times (2 +d_\tau)} ,
\]
where we recall that $n_\tau = n_{\tau_1} n_{\tau_2}$. To determine the entries of $C_{\tau}$, it is helpful to reshape each column of $\U_{\tau}$ as an $n_{\tau_1} \times n_{\tau_2}$ matrix. Using $\U^{(k)}_{\tau}$ to denote the matrix corresponding to the $k$th column, it follows from basic properties of the Kronecker product that
\[
 \U^{(k)}_{\tau} = \U_{\tau_1} C_{\tau}(:,:,k) \U_{\tau_2}^\top,
\]
where $C_{\tau}(:,:,k) \in \C^{(2+d_{\tau_1}) \times (2+d_{\tau_2} ) }$ is the $k$th frontal slice of $C_{\tau}$. For example, the first column $\be_{\tau}$ corresponds to $\U^{(1)}_{\tau} = \be_{\tau_1} \be_{\tau_2}^\top$ and thus the first slice of
$C_{\tau}$ is zero except for the entry $1$ at position $(1,1)$. Continuing in this manner, we obtain that the first two frontal slices of $C_{\tau}$ are 
\begin{align}
\left[\begin{array}{@{}c|c@{}}
  \begin{matrix}
  1 & 0 \\
  0 & 0
  \end{matrix}
  & \textbf{0} \\
\hline 
  \textbf{0} &
  \begin{matrix}
  \textbf{0} 
  \end{matrix}
\end{array}\right], \quad
\left[\begin{array}{@{}c|c@{}}
  \begin{matrix}
  0 & 1 \\
  1 & 0
  \end{matrix}
  & \textbf{0} \\
\hline 
  \textbf{0} &
  \begin{matrix}
  \boldbeta(\tau_1, \tau_2) 
  \end{matrix}
\end{array}\right], \quad
 \, \label{eq:first_slices}
\end{align}
where $\boldsymbol{\beta}(\tau_1,\tau_2)$ is the submatrix of $\boldsymbol{\beta}$ obtained by selecting the rows in $L(\tau_1)$ and the columns in $L(\tau_2)$. 
The remaining $d_\tau = d_{\tau_1} + d_{\tau_2}$ frontal slices of $C_{\tau}$ are given by
\begin{align}
\left[\begin{array}{@{}c|c@{}}
  \begin{matrix}
  0 & 0 \\
  0 & 0
  \end{matrix}
  & \textbf{0} \\
\hline 
  \textbf{0} &
  \begin{matrix}
  \bu_3 \bu_1^\top 
  \end{matrix}
\end{array}\right],
\dots,
\left[\begin{array}{@{}c|c@{}}
  \begin{matrix}
  0 & 0 \\
  0 & 0
  \end{matrix}
  & \textbf{0} \\
\hline 
  \textbf{0} &
  \begin{matrix}
  \bu_{2+d_{\tau_1}} \bu_1^\top 
  \end{matrix}
\end{array}\right], 
\left[\begin{array}{@{}c|c@{}}
  \begin{matrix}
  0 & 0 \\
  0 & 0
  \end{matrix}
  & \textbf{0} \\
\hline 
  \textbf{0} &
  \begin{matrix}
  \bu_1 \bu_3^\top 
  \end{matrix}
\end{array}\right], \dots ,
\left[\begin{array}{@{}c|c@{}}
  \begin{matrix}
  0 & 0 \\
  0 & 0
  \end{matrix}
  & \textbf{0} \\
\hline 
  \textbf{0} &
  \begin{matrix}
  \bu_1 \bu_{2+d_{\tau_2}}^\top 
  \end{matrix}
\end{array}\right], \label{eq:last_slices}
\end{align}
where $\bu_i$ is the vector (of appropriate length) with $1$ at entry $i$ and zeros everywhere else.

When $\tau_1$ and/or $\tau_2$ are leaves, then one or both of the first two terms in~\eqref{eq:binaryrecursion} vanish, i.e., $\bh_\ell = \mathbf{0}$. Consequently, the basis matrices do not need to account for these terms and take the form
\begin{equation} \label{eq:basismatrices}
 \U_\ell := \begin{bmatrix} \be_\ell & \vect(\A_\ell) \end{bmatrix} \in \C^{n_\ell^2\times 2}.
\end{equation}
Transfer tensors involving leaves need to be adjusted accordingly: It suffices to take into account the first slice of \eqref{eq:first_slices} and a modified version of the second where one or both of the ones in the upper-right block are set to zero. At the root tree $\bar \tau$, it suffices to form $\bh_{\bar \tau}$, which is equal to $\bh$ by construction. Hence, the transfer tensor $C_{\bar \tau}$ becomes a matrix equal to the second matrix in~\eqref{eq:first_slices}, while setting the upper left block in the second slice to zero.

Our construction satisfies $r_\ell = 2$ for each leaf $\ell$ because of~\eqref{eq:basismatrices} and $r_\tau = d_\tau + 2$ for every subtree $\tau \not= \bar \tau$ because of the size of the transfer tensor $C_\tau$.

\subsection{General trees}
\label{sec:generaltrees}

The results of Section~\ref{subsec:unstruct_case} extend to general trees, which allow for an arbitrary number of children at each node. In the following, we briefly sketch the construction of the TTNO in this general case.  While the basis matrices are defined in the same way as in~\eqref{eq:basismatrices}, the construction of the transfer tensors becomes slightly more technical. Considering a subtree $\tau = (\tau_1,\ldots,\tau_m)$,
we partition 
\[
 \boldsymbol{\beta}(\tau,\tau) = \begin{bmatrix}
                  \boldsymbol{\beta}(\tau_1,\tau_1) & \cdots &  \boldsymbol{\beta}(\tau_1,\tau_m) \\
                 & \ddots & \vdots \\
                 & &  \boldsymbol{\beta}(\tau_m,\tau_m) 
                \end{bmatrix}.
\]
The corresponding partition of the sum~\eqref{eq:htau} is now given by  \begin{equation}\label{eq:partgeneraltree}
 \bh_\tau = \sum_{i = 1}^m \be_{\tau_m} \otimes \cdots \otimes \be_{\tau_{i+1}} \otimes \bh_{\tau_i} \otimes \be_{\tau_{i-1}}\otimes \cdots \otimes \be_{\tau_1} + \sum_{i < j}^m \sum_{k\in L(\tau_i) \atop \ell \in L(\tau_j)} \beta(k, \ell) \cdot \ba_{\tau_j}^{(\ell)} \otimes \ba_{\tau_i}^{(k)}.
\end{equation}
Enumerating $L(\tau_i) = \{\ell_i,\ell_i+1,\ldots, \ell_{i+1}-1\}$, we proceed as before and set 
\[
 \U_{\tau_i} = \begin{bmatrix}
                   \be_{\tau_i} & \bh_{\tau_i} & \ba^{(\ell_i)} & \cdots & \ba^{(\ell_{i+1}-1)}
                  \end{bmatrix}, \qquad  i = 1,\ldots, m.
\] In analogy to the case of a binary tree, the decomposition~\eqref{eq:partgeneraltree} implies that there exists a transfer tensor $C_{\tau}$ of order $m+1$ such that
\[
 \U_{\tau} = \begin{bmatrix}
                   \be_{\tau} & \bh_{\tau} & \ba^{(\ell_1)} & \cdots & \ba^{(\ell_{m+1}-1)}
                  \end{bmatrix} = ( \U_{\tau_m} \otimes \cdots \otimes \U_{\tau_1} ) \mat_{1:m}(C_{\tau}^A).
\]
In summary, the discretized Hamiltonian~\eqref{eq:hamiltonian} also admits a TTNO representation for a general tree $\bar \tau$, with the ranks $r_\ell = 2$ for every leaf $\ell$ and $r_\tau =2+d_\tau$ for every subtree $\tau \neq \bar \tau$. Allowing for more children can be used to decrease $d_\tau$ and thus the ranks, at the expense of increasing the order of the transfer tensors.

\section{Compressed TTNO via HSS decomposition} \label{sec:ttnohss}

In this section, we will briefly introduce HSS matrices and establish their connection to TTNOs of Hamiltonians describing pairwise interactions.

\subsection{Hierarchical Semi-Separable (HSS) matrices}

We will use the concept of Hierarchically Semi-Separable (HSS) matrices, as defined in \cite{MR2759603}, to represent the $d\times d$ interaction matrix $\boldsymbol{\beta}$. For this purpose, a binary dimension tree $\bar \tau$ (see Definition \ref{def:dimtree}) is used to to recursively block-partition a $d\times d$ matrix $\boldsymbol{\beta}$. At the root $\bar \tau = (\tau_1,\tau_2)$, this corresponds to partitioning
\[
 \boldsymbol{\beta} = \boldsymbol{\beta}(\bar \tau,\bar \tau) = \begin{bmatrix}
 \boldsymbol{\beta}(\tau_1,\tau_1) & \boldsymbol{\beta}(\tau_1,\tau_2) \\
 & \boldsymbol{\beta}(\tau_2,\tau_2)
         \end{bmatrix}.
\]
This partitioning is recursively repeated for $\boldsymbol{\beta}(\tau_1,\tau_1)$ and $\boldsymbol{\beta}(\tau_2,\tau_2)$ using the subtrees $\tau_1$ and $\tau_2$, respectively.
Figure \ref{fig:hss_dec_TTN_TT} provides a graphical representation for different dimension trees. Note that HSS matrices are commonly used with (nearly) balanced binary trees; the degenerate tree shown in the right of Figure~\ref{fig:hss_dec_TTN_TT} is closely related to the notion of quasi-separable matrices; see also~\cite{KMR19} for a discussion.

For $\boldsymbol{\beta}$ to be an HSS matrix it is assumed that the off-diagonal block $\boldsymbol{\beta}(\tau_1,\tau_2)$ for every subtree $\tau = (\tau_1,\tau_2)$ admits a (low-rank) factorization
\begin{equation} \label{eq:betapart}
    \boldsymbol{\beta}(\tau_1,\tau_2) = \V_{\tau_1} \bS_{\tau_1,\tau_2} \V_{\tau_2}^*, \ \ \ \ \ \ \V_{\tau_1} \in \C^{d_{\tau_1} \times k_{\tau_1}}, \ \bS_{\tau_1,\tau_2} \in \C^{k_{\tau_1} \times k_{\tau_2}}, \ \V_{\tau_2} \in \C^{d_{\tau_2} \times k_{\tau_1}},
\end{equation}
for some (small) integers $k_{\tau_1}$, $k_{\tau_2}$. Recall that $d_{\tau_1}$, $d_{\tau_2}$ denote the cardinality of $L(\tau_1)$, $L(\tau_2)$.
Additionally, the HSS decomposition requires that the basis matrices $\V_{\tau_i}$ are nested across all levels. For each tree $\tau=(\tau_1, \tau_2)$, this structure is characterized by the existence of a translation operator $\bR_{\tau} \in \mathbb{C}^{(k_{\tau_1}+k_{\tau_2}) \times k_\tau}$ such that
\begin{align} \label{eq:nested}
    \V_{\tau} = \begin{pmatrix}
        \V_{\tau_1} & 0 \\
        0 & \V_{\tau_2}
    \end{pmatrix} \bR_{\tau} \in \C^{d_\tau \times k_\tau} .
\end{align}
The HSS rank is the largest value of $k_\tau$ across all levels. To represent an HSS matrix $\boldsymbol{\beta}$ it suffices to store the middle factor $\bS_{\tau_1,\tau_2}$ from~\eqref{eq:betapart} for all siblings $(\tau_1,\tau_2)$ and the translation matrices $\bR_{\tau}$, assuming that basis matrices on the leaf level are normalized to $V_\ell = 1$.
\begin{figure} \label{fig:hss_dec_TTN_TT}
\begin{center}
	\hspace{0.5cm}
	\begin{tikzpicture}
		\coordinate (S1) at (0,0);
		\coordinate (S2) at (0,4);
		\coordinate (S3) at (4,4);
		\coordinate (S4) at (4,0);
		
		\draw (S1) -- (S2) -- (S3) -- (S4) -- (S1);
		
		\coordinate (S11) at (2,0);
		\coordinate (S12) at (4,2);
		\coordinate (S13) at (2,4);
		\coordinate (S14) at (0,2);
		
		\draw (S11) -- (S13);
		\draw (S12) -- (S14);
		
		\coordinate (S21) at (1,2);
		\coordinate (S22) at (2,3);
		\coordinate (S23) at (1,4);
		\coordinate (S24) at (0,3);
		\draw (S21) -- (S23);
		\draw (S22) -- (S24);
		
		\coordinate (S211) at (3,0);
		\coordinate (S212) at (2,1);
		\coordinate (S213) at (3,2);
		\coordinate (S214) at (4,1);
		\draw (S211) -- (S213);
		\draw (S212) -- (S214);
		
		\coordinate (T11) at (0.5,3);
		\coordinate (T12) at (0,3.5);
		\coordinate (T13) at (0.5,4);
		\coordinate (T14) at (1,3.5);
		\draw (T11) -- (T13);
		\draw (T12) -- (T14);
		
		\coordinate (T21) at (1.5,2);
		\coordinate (T22) at (1,2.5);
		\coordinate (T23) at (1.5,3);
		\coordinate (T24) at (2,2.5);
		\draw (T21) -- (T23);
		\draw (T22) -- (T24);
		
		\coordinate (T31) at (2.5,1);
		\coordinate (T32) at (2,1.5);
		\coordinate (T33) at (2.5,2);
		\coordinate (T34) at (3,1.5);
		\draw (T31) -- (T33);
		\draw (T32) -- (T34);
		
		\coordinate (T41) at (3.5,0);
		\coordinate (T42) at (3,0.5);
		\coordinate (T43) at (3.5,1);
		\coordinate (T44) at (4,0.5);
		\draw (T41) -- (T43);
		\draw (T42) -- (T44);
		
	\end{tikzpicture}
	\hspace{1.5cm}
	\begin{tikzpicture}
		\coordinate (S1) at (0,0);
		\coordinate (S2) at (0,4);
		\coordinate (S3) at (4,4);
		\coordinate (S4) at (4,0);
		
		\draw (S1) -- (S2) -- (S3) -- (S4) -- (S1);
		
		\foreach \i in {0.5,1,...,3.5}
		{
		\draw (\i,0) -- (\i,4.5 -\i);
		\draw (-0.5 + \i,4 - \i) -- (4,4 - \i); 
		}
		
		\end{tikzpicture}
	\end{center}
	\vspace{0.3cm}
	\begin{center}
		\begin{tikzpicture}[every node/.style={sibling distance=10mm},level 1/.style={sibling distance=27mm},level 2/.style={sibling distance=13mm},level 3/.style={sibling distance=8mm},level 4/.style={sibling distance=5mm},level distance = 6mm]
		\node[circle,draw,scale=0.8]  { }
		child { node[circle,draw,scale=0.8] {} 
			child{ node[circle,draw,scale=0.8] {} 
				child{ node[circle,draw,scale=0.8] {} }
				child{ node[circle,draw,scale=0.8] {}	
					}}
			child{ node[circle,draw,scale=0.8] {} 
				child{ node[circle,draw,scale=0.8] {}}
				child{ node[circle,draw,scale=0.8] {} }}}
		child { node[circle,draw,scale=0.8] {} 
			child{ node[circle,draw,scale=0.8] {} 
				child{ node[circle,draw,scale=0.8] {} }
				child{ node[circle,draw,scale=0.8] {}	
					}}
			child{ node[circle,draw,scale=0.8] {} 
				child{ node[circle,draw,scale=0.8] {}}
				child{ node[circle,draw,scale=0.8] {} }}};
	\end{tikzpicture}
	\hspace{1.5cm}
	\begin{tikzpicture}[every node/.style={sibling distance=2mm},,level 1/.style={sibling distance=8mm},level 2/.style={sibling distance=8mm},level 3/.style={sibling distance=8mm},level 4/.style={sibling distance=8mm},level 5/.style={sibling distance=8mm},level 6/.style={sibling distance=8mm},level distance = 4mm]
		\node[circle,draw,scale=0.8]  { }
			child{ node[circle,draw,scale=0.8]{} }
			child{ node[circle,draw,scale=0.8]{}{
				child{ node[circle,draw,scale=0.8]{} }
				child{ node[circle,draw,scale=0.8]{} {
					child{ node[circle,draw,scale=0.8]{} }
					child{ node[circle,draw,scale=0.8]{} {
						child{ node[circle,draw,scale=0.8]{} }
						child{ node[circle,draw,scale=0.8]{} {
							child{ node[circle,draw,scale=0.8]{} }
							child{ node[circle,draw,scale=0.8]{} {
								child{ node[circle,draw,scale=0.8]{} }
								child{ node[circle,draw,scale=0.8]{}{
									child{ node[circle,draw,scale=0.8]{} }
								child{ node[circle,draw,scale=0.8]{} }
			}}}}}}}}}}}};
	\end{tikzpicture}
	\end{center}
 \caption{\label{fig:lemmapart} Different recursive block-partitions of an $8 \times 8$ interaction matrix $\boldbeta$. Left: Recursive block-partition corresponding to a balanced binary tree. Right: Recursive block-partition corresponding to a degenerate tree.}
\end{figure}
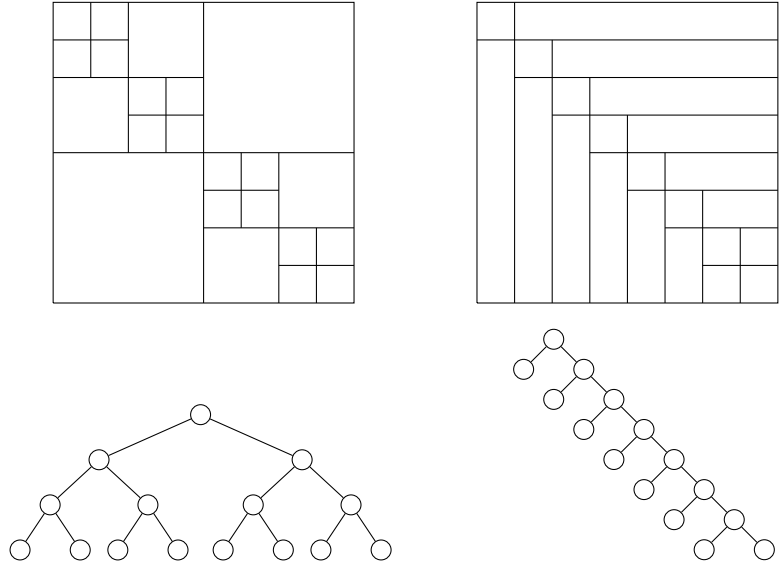
\begin{remark} \label{remark:hss} \rm 
It is important to note that~\eqref{eq:betapart} enforces identical left and right factors, which is not required in the usual definition of HSS matrices. However, it is not difficult to see that the above definition of an HSS matrix $\boldsymbol{\beta}$ coincides with the usual definition applied to the \emph{symmetrized} interaction matrix $\boldsymbol{\beta}_s = \boldsymbol{\beta} + \boldsymbol{\beta}^\top$. In particular, the results from~\cite{MR2759603} imply that the smallest $k_\tau$ for which $\boldsymbol{\beta}$ admits an HSS decomposition in the sense of~\eqref{eq:betapart}--\eqref{eq:nested} is given by
\[
 k_\tau = \rank\big( \boldsymbol{\beta}_s(\tau,\bar \tau \setminus \tau) \big), \quad \forall \tau \in T(\bar \tau).
\]
The submatrix $\boldsymbol{\beta}_s(\tau,\bar \tau \setminus \tau)$ is commonly referred to as an HSS block row of $\boldsymbol{\beta}_s$.
\end{remark}

\subsection{TTNO from HSS} \label{eq:ttnofromhss}

In the following, we derive a TTNO for the Hamiltonian from an HSS decomposition of the interaction matrix $\boldbeta$.

\begin{corollary} \label{cor:HSS_construction}
     Let $\widehat H$ be the linear operator defined by \eqref{eq:hamiltonian}, and let $\bar \tau$ be a binary dimension tree. If $\boldsymbol{\beta}$ admits an HSS decomposition~\eqref{eq:betapart}--\eqref{eq:nested} then there exists a TTNO representation $H$ of $\widehat H$ with representation ranks $r_\tau = 2+k_\tau$ for every subtree $\tau \in T(\bar \tau)$ and 
     $r_\ell = 2$ for every leaf $\ell \in L(\bar \tau)$.
\end{corollary}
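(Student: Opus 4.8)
The plan is to leverage the already-established Theorem~\ref{thm:TTNO_unstruct} together with Remark~\ref{remark:hss}, which together make this corollary almost immediate. By Theorem~\ref{thm:TTNO_unstruct}, the Hamiltonian $\widehat H$ admits a TTNO representation whose ranks satisfy $r_\ell = 2$ at every leaf and $r_\tau = 2 + \rank(\boldsymbol{\beta}_s(\tau,\bar \tau \setminus \tau))$ for every subtree $\tau \neq \bar\tau$. On the other hand, Remark~\ref{remark:hss} identifies the smallest admissible HSS parameter as exactly $k_\tau = \rank(\boldsymbol{\beta}_s(\tau,\bar \tau \setminus \tau))$. Substituting this identity directly into~\eqref{eq:rtausubmatrix} yields $r_\tau = 2 + k_\tau$, which is the claimed bound. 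Thus the first thing I would do is simply cite these two results and perform the substitution.

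The remaining (and more substantive) content is to make the \emph{existence} assertion constructive: given an HSS decomposition~\eqref{eq:betapart}--\eqref{eq:nested}, produce basis matrices and transfer tensors realizing the rank $2+k_\tau$. Here I would mirror the explicit construction at the end of Section~\ref{subsec:unstruct_case}, but replace the full ``one-site'' block by the compressed HSS basis. Concretely, for a subtree $\tau = (\tau_1,\tau_2)$ I would define $\U_{\tau_i}$ to have the two columns $\be_{\tau_i}$ and $\bh_{\tau_i}$, followed by $k_{\tau_i}$ columns formed by the $\A$-weighted vectors $\ba_{\tau_i}^{(\cdot)}$ combined according to the HSS basis matrix $\V_{\tau_i}$ --- that is, the columns $\sum_{\ell \in L(\tau_i)} (\V_{\tau_i})_{\ell, p} \, \ba_{\tau_i}^{(\ell)}$ for $p = 1,\ldots,k_{\tau_i}$. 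The key observation is that the interaction term $\sum \beta_s(i,j)\, \ba_{\tau_2}^{(j)} \otimes \ba_{\tau_1}^{(i)}$ appearing in Lemma~\ref{lemma:part} factors through $\boldsymbol{\beta}(\tau_1,\tau_2) = \V_{\tau_1}\bS_{\tau_1,\tau_2}\V_{\tau_2}^*$, so it can be expressed using only the compressed columns, with $\bS_{\tau_1,\tau_2}$ playing the role of the coupling block previously occupied by the full submatrix $\boldsymbol{\beta}(\tau_1,\tau_2)$ in~\eqref{eq:first_slices}.

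The transfer tensors then follow the same template as~\eqref{eq:first_slices}--\eqref{eq:last_slices}, with two modifications: the coupling block becomes $\bS_{\tau_1,\tau_2}$ rather than the uncompressed $\boldsymbol{\beta}(\tau_1,\tau_2)$, and the mechanism that propagates the single-site columns upward must now respect the \emph{nestedness} relation~\eqref{eq:nested}. This nestedness is precisely what guarantees that the compressed columns at level $\tau$ are linear combinations of the compressed columns at $\tau_1$ and $\tau_2$, so that the translation operator $\bR_\tau$ supplies exactly the entries of the corresponding slices of $C_\tau$. I would verify that the recursion~\eqref{eq:binaryrecursion} of Lemma~\ref{lemma:part}, rewritten in terms of the factored off-diagonal blocks, closes consistently: each column of $\U_\tau$ is obtained as a linear combination of Kronecker products of columns of $\U_{\tau_1}$ and $\U_{\tau_2}$, giving a transfer tensor $C_\tau \in \C^{(2+k_{\tau_1}) \times (2+k_{\tau_2}) \times (2+k_\tau)}$.

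The main obstacle I anticipate is bookkeeping the nestedness~\eqref{eq:nested} through the recursion --- ensuring that the $k_\tau$ compressed interaction columns at a parent node are generated exactly (not merely up to rank) from the $k_{\tau_1} + k_{\tau_2}$ columns of its children via $\bR_\tau$, and that this composes correctly with the $\be_\tau$ and $\bh_\tau$ columns. Verifying that the dimensions of $C_\tau$ are genuinely $2+k_\tau$ in the third mode (rather than something larger that would then need recompression) is the crux, and it is exactly here that the HSS nestedness, as opposed to mere blockwise low rank, is essential. Since Theorem~\ref{thm:TTNO_unstruct} already gives the matching \emph{lower-dimensional} rank via matricization ranks, the constructive part only needs to exhibit a representation achieving it, so I would frame the argument as producing an explicit such representation and appealing to Theorem~\ref{thm:ttnbounds} for optimality.
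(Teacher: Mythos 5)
Your proposal is correct and follows essentially the same route as the paper: the rank statement is obtained exactly as in the paper's proof, by combining Theorem~\ref{thm:TTNO_unstruct} with the identity $k_\tau = \rank(\boldsymbol{\beta}_s(\tau,\bar\tau\setminus\tau))$ from Remark~\ref{remark:hss}. Your constructive part (compressed columns $\ba_{\tau_i}\V_{\tau_i}$, coupling block $\bS_{\tau_1,\tau_2}$ in place of $\boldsymbol{\beta}(\tau_1,\tau_2)$, and slices generated via the translation operators $\bR_\tau$ through the nestedness relation~\eqref{eq:nested}) coincides with the explicit construction the paper gives immediately after the corollary in Section~\ref{eq:ttnofromhss}.
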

\begin{proof}
 This result is a direct consequence of Theorem~\ref{thm:TTNO_unstruct} combined with Remark~\ref{remark:hss}. 
\end{proof}

In the rest of this section, we provide an explicit construction that realizes the representation ranks established by Corollary~\ref{cor:HSS_construction} from an HSS decomposition~\eqref{eq:betapart}--\eqref{eq:nested} of $\boldsymbol{\beta}$. For this purpose, let $\tau = (\tau_1, \tau_2) \in \mathcal{T}(\bar{\tau}) \setminus \{ \bar \tau\}$ such that neither $\tau_1$ nor $\tau_2$ is a leaf. Recalling the result of Lemma \ref{lemma:part}, the vectorization of the part of the Hamiltonian $\widehat H$ corresponding to $\tau$ takes the form\begin{align*}
        \bh_\tau = \be_{{\tau_2}} \otimes \bh_{\tau_1} + \bh_{\tau_2} \otimes \be_{{\tau_1}}
 +\sum_{i \in L(\tau_1)\atop j \in L(\tau_2)} \beta(i,j) \cdot  \ba_{\tau_2}^{(j)} \otimes \ba_{\tau_1}^{(i)}.
    \end{align*}
    Introducing the short-hand notation 
    \begin{align*}
    \ba_{\tau_1} := [\ba_{\tau_1}^{(\ell_1)} \ \cdots \   \ba_{\tau_1}^{(\ell_2-1)}] \in \C^{n_{\tau_1}^2 \times d_{\tau_1}}, \quad 
        \ba_{\tau_2} := [\ba^{(\ell_2)}_{\tau_2} \ \cdots \  \ba_{\tau_2}^{(\ell_3-1)}] \in \C^{n_{\tau_2}^2 \times d_{\tau_2}}
    \end{align*}
    allows us to rewrite the third summand as 
    \begin{align}
         \sum_{i \in L(\tau_1)\atop j \in L(\tau_2)} \beta(i,j) \cdot  \ba_{\tau_2}^{(j)} \otimes \ba_{\tau_1}^{(i)} =\left( \ba_{\tau_2} \otimes \ba_{\tau_1} \right) \vect(\boldbeta(\tau_1, \tau_2) ).
         \label{eq:proof_hss_construction}
    \end{align}
    The HSS structure of the interaction matrix $\boldbeta$ induces a low-rank structure on the off-diagonal block $\boldbeta(\tau_1, \tau_2) = \textbf{V}_{\tau_1} \textbf{S}_{\tau_1 ,\tau_2} \textbf{V}_{\tau_2}^*$; see~\eqref{eq:betapart}.
    Hence, we obtain that~\eqref{eq:proof_hss_construction} can be rewritten as  
    \begin{equation} \label{eq:compressedSum}
        \left(\ba_{\tau_2} \otimes \ba_{\tau_1}\right) \vect(\beta(\tau_1, \tau_2) ) 
        = \left( \ba_{\tau_2} \textbf{V}_{\tau_2} \otimes \ba_{\tau_1} \textbf{V}_{\tau_1} \right) \vect(\textbf{S}_{\tau_1 ,\tau_2}).
    \end{equation}
    In turn, it suffices to consider the compressed bases 
    \[
        \tilde \ba_{\tau_1} := \ba_{\tau_1} \textbf{V}_{\tau_1} \in \C^{n_{\tau_1}^2 \times k_{\tau_1}},\quad \tilde \ba_{\tau_2} := \ba_{\tau_2} \textbf{V}_{\tau_2} \in \C^{n_{\tau_2}^2 \times k_{\tau_2}}.\]
     We proceed as in the construction of Section~\ref{subsec:unstruct_case} for the unstructured case, but using compressed bases. For this purpose, we introduce
    \begin{align*}
        &\widetilde \U_{\tau_1} := [\be_{\tau_1} \  \bh_{\tau_1} \  \tilde \ba_{\tau_1} ] \in \C^{n_{\tau_1}^2 \times (2 +k_{\tau_1})}, \quad
        \widetilde \U_{\tau_2} := [\be_{\tau_2} \ \bh_{\tau_2} \   \tilde \ba_{\tau_2} ] \in \C^{n_{\tau_2}^2 \times (2 +k_{\tau_2})}.
    \end{align*}
    We now aim at determining the tensor $\widetilde C_\tau \in \C^{(2+k_{\tau_1}) \times (2+k_{\tau_2}) \times (2+k_{\tau})}$ that transfers these bases to the corresponding basis at the parent node:
    \[
     \widetilde \U_{\tau} \ := [\be_{\tau}   \ \  \bh_{\tau} \ \  \tilde \ba_\tau  ] \in \C^{n_{\tau}^2 \times (2 +k_\tau)},\quad \tilde \ba_{\tau} := \ba_{\tau} \textbf{V}_{\tau}.
    \]

    Similarly to (\ref{eq:first_slices}), the first two frontal slices of $\widetilde C_\tau$ are determined using~\eqref{eq:compressedSum}:
    $$
\left[\begin{array}{@{}c|c@{}}
  \begin{matrix}
  1 & 0 \\
  0 & 0
  \end{matrix}
  & \textbf{0} \\
\hline 
  \textbf{0} &
  \begin{matrix}
  \textbf{0} 
  \end{matrix}
\end{array}\right], \quad
\left[\begin{array}{@{}c|c@{}}
  \begin{matrix}
  0 & 1 \\
  1 & 0
  \end{matrix}
  & \textbf{0} \\
\hline 
  \textbf{0} &
  \begin{matrix}
  \textbf{S}_{\tau_1, \tau_2} 
  \end{matrix}
\end{array}\right].$$
To determine the remaining frontal slices of $\widetilde C_\tau$, we extend~\eqref{eq:last_slices} by first defining the matrix
\[ \widetilde{\textbf{M}} = \left[ \vect( \bu_3 \bu_1^\top),\ \ldots, \vect(\bu_{2+k_{\tau_1}} \bu_1^\top), \vect(\bu_1 \bu_3^\top), \ldots, \vect(\bu_1 \bu^\top_{2+k_{\tau_2}}) \right] \in \C^{k_{\tau_1} k_{\tau_2} \times (k_{\tau_1} + k_{\tau_2}) },\]
where $\bu_i$ again denotes the $i$th unit vector (of appropriate length), with $1$ at entry $i$ and zeros everywhere else. The definition of this matrix ensures that
\begin{equation} \label{eq:compressedTransformation}
\left[ \be_{\tau_2} \otimes \tilde \ba_{\tau_1} \ | \ \tilde \ba_{\tau_2} \otimes \be_{\tau_1} \right]
        =  \left( \widetilde \U_{\tau_2} \otimes \widetilde \U_{\tau_1}   \right) \widetilde{ \textbf{M}} \in \C^{n_\tau^2 \times (k_{\tau_1} + k_{\tau_2})}.
\end{equation}
Together with the nestedness~\eqref{eq:nested} of the HSS basis matrices, this gives
\begin{align*}
    \tilde \ba_\tau 
        = \ba_{\tau}  \textbf{V}_\tau 
        &=   \left[ \be_{\tau_2} \otimes \ba_{\tau_1} \ | \ \ba_{\tau_2} \otimes \be_{\tau_1} \right]
            \left[ \begin{array}{cc} \textbf{V}_{\tau_1} & 0 \\ 0 & \textbf{V}_{\tau_2} \end{array} \right] \textbf{R}_\tau \\
        &= \left[ \be_{\tau_2} \otimes \tilde \ba_{\tau_1} \ | \ \tilde \ba_{\tau_2} \otimes \be_{\tau_1} \right] \textbf{R}_\tau = \left( \widetilde \U_{\tau_2} \otimes \widetilde \U_{\tau_1}   \right) \widetilde{ \textbf{M}} \textbf{R}_\tau \in \C^{n_\tau^2 \times k_\tau}.
\end{align*}
Hence, instead of~\eqref{eq:last_slices}, the remaining slices of $\tilde C_\tau$ are now set to
\begin{align*}
    \left[\begin{array}{@{}c|c@{}}
  \begin{matrix}
  0 & 0 \\
  0 & 0
  \end{matrix}
  & \textbf{0} \\
    \hline  \\[-2ex]
  \textbf{0} & 
  \begin{matrix} 
  \bigl(\widetilde{\textbf{M}} \textbf{R}_\tau \bigr)_i 
  \end{matrix} 
\end{array}\right] \qquad \ i=1,\dots,k_\tau \, ,
\end{align*}
where $(\widetilde{\textbf{M}} \textbf{R}_\tau)_i$ denotes the matricization of the $i$th column . The transfer tensors at the leaves and at the root tree $\bar \tau$ are adjusted accordingly. This concludes our construction of the TTNO from the HSS decomposition of $\boldsymbol{\beta}$ by choosing the basis matrices $\U_\ell$ defined in~\eqref{eq:basismatrices}
and the transfer tensors defined above. The ranks $r_\tau$ of this construction match those of Corollary~\ref{cor:HSS_construction}.

\begin{remark} \rm 
The construction above extends to trees with arbitrarily many children at each node by employing a suitable generalization of the HSS decomposition, which permits the use of more than two subtrees on each level. However, we are not aware of algorithms or even software that cover such a general setting.
\end{remark}

\subsection{Approximation of $\boldsymbol{\beta}$ by an HSS  matrix}

In simple cases, $\boldsymbol{\beta}$ is known to admit an exact HSS decomposition with small ranks. For example, in the case of nearest-neighbor interaction, the only nonzero entries of $\boldsymbol{\beta}$ are $\beta(i,i+1) = 1$ for $i = 1,\ldots,d-1$. In turn, the rank of the HSS block row $\boldsymbol{\beta}_s(\tau,\bar \tau \setminus \tau)$ is bounded by two because this matrix has at most two nonzero entries. By Corollary~\ref{cor:HSS_construction}, this implies that the TTNO rank is bounded by four, which recovers known results from the literature; see, e.g.,~\cite[Example 3.8]{Tobler2012}.

For long-rank interactions, $\boldsymbol{\beta}$ usually does \emph{not} admit an HSS decomposition with small ranks but it can often be well approximated by such a matrix. This approximation is closely related to the low-rank approximation of the HSS block rows $\boldsymbol{\beta}_s(\tau,\bar \tau \setminus \tau)$ from Remark~\ref{remark:hss}. Given a tolerance $\epsilon > 0$, we choose $k$ such that
\begin{equation} \label{eq:epsassumptions}
 \sigma_{k+1}( \boldsymbol{\beta}_s(\tau,\bar \tau \setminus \tau) ) \le \epsilon \| \boldsymbol{\beta}_s(\tau,\bar \tau \setminus \tau)  \|_2,
\end{equation}
where $\sigma_{k+1}(\cdot)$ denotes the $(k+1)$th largest singular value of a matrix. This property is known to hold with $k = \mathcal O( \log(d/\epsilon) )$ for long-range interactions commonly found in the literature, including Coulomb interaction $\beta(i,j) = 1/|i-j|$; see~\cite{Braess2009,LT21}.

When~\eqref{eq:epsassumptions} is satisfied, one can determine an approximation of $\boldsymbol{\beta}$ having HSS rank $k$ and approximation error proportional to $\epsilon$. This follows from applying~\cite[Corollary 4.3]{XXCB14} to the symmetrized interaction matrix $\boldsymbol{\beta}_s$. The result is constructive; the matrix $\boldsymbol{\beta}_{k}$ can be constructed by applying the SVD-based procedures from~\cite{XXCB14,MRK20} to $\boldsymbol{\beta}_s$. 
\begin{lemma} \label{lem:hssErrorBounds}
    Let $\boldbeta \in \C^{d \times d}$ be a strictly upper triangular interaction matrix such that~\eqref{eq:epsassumptions} is satisfied for a binary dimension tree $\bar \tau$ and some $\epsilon  > 0$. Then there exists a strictly upper triangular matrix $\boldsymbol{\beta}_{k} \in \C^{d \times d}$ in HSS decomposition~\eqref{eq:betapart}--\eqref{eq:nested} of HSS rank $k$ such that
    \[
        \| \boldsymbol{\beta} - \boldsymbol{\beta}_k \|_F 
            \le  C h(\bar \tau) \sqrt{k} \| \boldsymbol \beta \|_{\rm F} \cdot  \epsilon\, .
    \]
    is satisfied 
    for some constant $C$, where $h(\bar \tau)$ denotes the height of $\bar \tau$.
\end{lemma}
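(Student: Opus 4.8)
The plan is to reduce the statement to a standard HSS-approximation error bound for the \emph{symmetrized} matrix $\boldsymbol{\beta}_s = \boldsymbol{\beta}+\boldsymbol{\beta}^\top$ and then to pass back to $\boldsymbol{\beta}$ by extracting the strictly upper triangular part. By Remark~\ref{remark:hss}, the HSS block rows of $\boldsymbol{\beta}_s$ are exactly the submatrices $\boldsymbol{\beta}_s(\tau,\bar\tau\setminus\tau)$ appearing in~\eqref{eq:epsassumptions}, so the hypothesis states precisely that truncating every HSS block row of $\boldsymbol{\beta}_s$ to rank $k$ incurs a relative spectral-norm error of at most $\epsilon$. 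First, I would invoke the constructive compression of~\cite[Corollary 4.3]{XXCB14} (equivalently, the SVD-based routine of~\cite{MRK20}) applied to $\boldsymbol{\beta}_s$. Since $\boldsymbol{\beta}_s$ is symmetric, this procedure can be run so as to use identical left and right bases $\V_{\tau_1}=\V_{\tau_2}$ on each off-diagonal block, producing a \emph{symmetric} HSS matrix $(\boldsymbol{\beta}_s)_k$ of HSS rank $k$.

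Next I would set $\boldsymbol{\beta}_k := \mathrm{triu}\big((\boldsymbol{\beta}_s)_k\big)$, the strictly upper triangular part of this symmetric approximant. Running the equivalence of Remark~\ref{remark:hss} in reverse, a symmetric HSS matrix of rank $k$ has a strictly upper triangular part that is an HSS matrix in the sense of~\eqref{eq:betapart}--\eqref{eq:nested} of the same rank $k$: the off-diagonal blocks $\boldsymbol{\beta}_k(\tau_1,\tau_2)$ coincide with those of $(\boldsymbol{\beta}_s)_k$ and inherit the nested factorization $\V_{\tau_1}\bS_{\tau_1,\tau_2}\V_{\tau_2}^*$, while the diagonal blocks are treated recursively and reduce, at the leaf level, to the zero diagonal entries. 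Hence $\boldsymbol{\beta}_k$ is strictly upper triangular and of HSS rank $k$, as required.

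It then remains to bound the error. Because $\boldsymbol{\beta}=\mathrm{triu}(\boldsymbol{\beta}_s)$ and $\boldsymbol{\beta}_k=\mathrm{triu}((\boldsymbol{\beta}_s)_k)$, we have $\boldsymbol{\beta}-\boldsymbol{\beta}_k=\mathrm{triu}(\boldsymbol{\beta}_s-(\boldsymbol{\beta}_s)_k)$, and zeroing the lower triangle can only decrease the Frobenius norm, so $\|\boldsymbol{\beta}-\boldsymbol{\beta}_k\|_F \le \|\boldsymbol{\beta}_s-(\boldsymbol{\beta}_s)_k\|_F$. Combining this with $\|\boldsymbol{\beta}_s\|_F=\sqrt{2}\,\|\boldsymbol{\beta}\|_F$ (valid since $\boldsymbol{\beta}_s$ is symmetric with zero diagonal) reduces the claim to the estimate $\|\boldsymbol{\beta}_s-(\boldsymbol{\beta}_s)_k\|_F \le C\, h(\bar\tau)\sqrt{k}\,\epsilon\,\|\boldsymbol{\beta}_s\|_F$, after which the stated constant $C$ absorbs the factor $\sqrt{2}$.

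The main obstacle is establishing exactly this last estimate from~\cite[Corollary 4.3]{XXCB14}, i.e., matching its hypotheses with our setting and confirming that its conclusion delivers precisely the factor $h(\bar\tau)\sqrt{k}$. Two mechanisms must be tracked: the truncation errors at the different levels of $\bar\tau$ accumulate through the nested bases and, by a triangle inequality across the $h(\bar\tau)$ levels, contribute the factor $h(\bar\tau)$; while the passage from the relative spectral control of each rank-$k$ compression in~\eqref{eq:epsassumptions} to a Frobenius estimate contributes the factor $\sqrt{k}$. The delicate points are (i) ensuring the symmetric variant of the compression produces a symmetric $(\boldsymbol{\beta}_s)_k$ of HSS rank \emph{exactly} $k$, so that Remark~\ref{remark:hss} applies and $\boldsymbol{\beta}_k$ is genuinely of the form~\eqref{eq:betapart}--\eqref{eq:nested}, and (ii) checking that replacing $\|\boldsymbol{\beta}_s\|_2$ by the larger $\|\boldsymbol{\beta}_s\|_F$ in the relative bound only strengthens it, so that the resulting constant remains clean.
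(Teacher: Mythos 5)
Your proposal is correct and takes essentially the same route as the paper: the paper establishes this lemma precisely by applying \cite[Corollary 4.3]{XXCB14} (via the SVD-based compression procedures of \cite{XXCB14,MRK20}) to the symmetrized matrix $\boldsymbol{\beta}_s = \boldsymbol{\beta}+\boldsymbol{\beta}^\top$ and recovering a strictly upper triangular approximant, exactly as you do. The details you add beyond the paper's citation-level argument --- that the off-diagonal blocks of the upper triangular part inherit the nested factorizations \eqref{eq:betapart}--\eqref{eq:nested}, that discarding the lower triangle can only decrease the Frobenius error, and that $\|\boldsymbol{\beta}_s\|_F = \sqrt{2}\,\|\boldsymbol{\beta}\|_F$ is absorbed into the constant $C$ --- are all correct.
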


Using the approximation $\boldsymbol{\beta}_k$ from Lemma~\ref{lem:hssErrorBounds}, a TTNO decomposition of the corresponding Hamiltonian $\widehat H_{k}$ can be cheaply obtained using the procedure described in Section~\ref{eq:ttnofromhss}. The following result shows that $\widehat H_{k}$ is $\mathcal O(\epsilon)$-close to $\widehat H$ in the spectral norm.
\begin{theorem} \label{thm:error_bounds}
Under the setting and assumptions of Lemma~\ref{lem:hssErrorBounds}, let $\widehat H$ be the linear operator defined by \eqref{eq:hamiltonian}. Then
    \[ \widehat H_{k} = \sum_{i < j}^d \beta_k(i,j) \cdot \A^{(i)}\A^{(j)} \in \mathbb C^{(n_1\cdots n_d) \times (n_1\cdots n_d)},\]
    has TTNO rank $2+k$ and satisfies the error bound
    \begin{align*} 
    \norm{ \widehat H -  \widehat H_{k,\epsilon}}_2 
        &\leq  C h(\bar \tau) \sqrt{k} \| \boldsymbol \beta \|_{\rm F} \Big(  \sum_{i < j}^d \| \A_i \|^2_2 \| \A_j \|^2_2 \Big)^{1/2} \cdot \epsilon.
        \end{align*}
\end{theorem}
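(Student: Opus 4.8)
The plan is to treat the two assertions separately, and both reduce to results already established earlier. For the rank claim, I would simply invoke Lemma~\ref{lem:hssErrorBounds}, which supplies $\boldsymbol{\beta}_k$ as a strictly upper triangular matrix admitting an HSS decomposition~\eqref{eq:betapart}--\eqref{eq:nested} of HSS rank $k$. Feeding this into Corollary~\ref{cor:HSS_construction} immediately yields a TTNO representation of $\widehat H_k$ whose ranks satisfy $r_\tau = 2 + k_\tau \le 2 + k$ on every internal subtree and $r_\ell = 2$ at the leaves, so the representation rank is $2+k$.

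For the error bound, I would first express the difference in canonical form,
\[
 \widehat H - \widehat H_k = \sum_{i<j}^d \big( \beta(i,j) - \beta_k(i,j) \big) \A^{(i)} \A^{(j)},
\]
and apply the triangle inequality in the spectral norm. The essential observation is that each $\A^{(i)}\A^{(j)}$ with $i<j$ is a single Kronecker product whose factors are $\A_i$ at position $i$, $\A_j$ at position $j$, and identities elsewhere; since the spectral norm of a Kronecker product factorizes over its factors, this gives $\norm{\A^{(i)}\A^{(j)}}_2 = \norm{\A_i}_2 \norm{\A_j}_2$. The triangle inequality therefore yields the entrywise estimate $\norm{\widehat H - \widehat H_k}_2 \le \sum_{i<j}^d |\beta(i,j)-\beta_k(i,j)|\, \norm{\A_i}_2 \norm{\A_j}_2$.

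The decisive step is then to pass from this $\ell^1$-type estimate to the Frobenius-norm form of the theorem via the Cauchy--Schwarz inequality, separating the coefficient errors from the operator norms:
\[
 \sum_{i<j}^d |\beta(i,j)-\beta_k(i,j)|\, \norm{\A_i}_2 \norm{\A_j}_2 \le \Big( \sum_{i<j}^d |\beta(i,j)-\beta_k(i,j)|^2 \Big)^{1/2} \Big( \sum_{i<j}^d \norm{\A_i}_2^2 \norm{\A_j}_2^2 \Big)^{1/2}.
\]
Because $\boldsymbol{\beta}$ and $\boldsymbol{\beta}_k$ are both strictly upper triangular, the first factor equals exactly $\norm{\boldsymbol{\beta} - \boldsymbol{\beta}_k}_{\rm F}$, at which point inserting the bound $\norm{\boldsymbol{\beta} - \boldsymbol{\beta}_k}_{\rm F} \le C\, h(\bar\tau)\sqrt{k}\,\norm{\boldsymbol{\beta}}_{\rm F}\,\epsilon$ from Lemma~\ref{lem:hssErrorBounds} reproduces the claimed estimate verbatim.

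I do not expect a genuine obstacle: the argument is a short chain of elementary inequalities once the two bookkeeping facts are in place, namely the spectral-norm factorization $\norm{\A^{(i)}\A^{(j)}}_2 = \norm{\A_i}_2\norm{\A_j}_2$ and the identification of the first Cauchy--Schwarz factor with $\norm{\boldsymbol{\beta}-\boldsymbol{\beta}_k}_{\rm F}$. The only mild subtlety worth flagging is that the triangle inequality is generally not tight here, since the interaction operators neither commute nor are orthogonal; however, no cancellation is needed to match the stated bound, so the crude estimate suffices.
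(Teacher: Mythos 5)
Your proposal is correct and follows essentially the same route as the paper: triangle inequality and the spectral-norm factorization $\norm{\A^{(i)}\A^{(j)}}_2 = \norm{\A_i}_2\norm{\A_j}_2$, then Cauchy--Schwarz to produce the factor $\norm{\boldsymbol{\beta}-\boldsymbol{\beta}_k}_{\rm F}$, and finally Lemma~\ref{lem:hssErrorBounds} together with Corollary~\ref{cor:HSS_construction} for the rank claim. The only cosmetic difference is that you observe the Kronecker-product norm identity holds with equality, whereas the paper only needs submultiplicativity; the substance is identical.
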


\begin{proof}
From the triangular inequality and the Cauchy-Schwartz inequality, it follows that
\begin{align*}
  \norm{\widehat H - \widehat H_{k,\epsilon}}_2 
        & = \Big\| \sum_{i < j}^d  ( \beta(i,j)-\beta_{k}(i,j)) \A^{(i)}\A^{(j)} \Big\|_2  
        \leq \sum_{i < j}^d |\beta(i,j)-\beta_{k}(i,j)| \cdot \| \A^{(i)} \|_2 \| \A^{(j)} \|_2 \\
        & = \sum_{i < j}^d |\beta(i,j)-\beta_{k}(i,j)| \cdot \| \A_i \|_2 \| \A_j \|_2 \le \|\boldsymbol{\beta}-\boldsymbol{\beta}_k\|_F \Big(  \sum_{i < j}^d \| \A_i \|^2_2 \| \A_j \|^2_2 \Big)^{1/2}.
\end{align*}
The proof is concluded by applying Lemma~\ref{lem:hssErrorBounds} and Corollary~\ref{cor:HSS_construction}.
\end{proof}

\section{Numerical experiments} \label{sec:numexp}

In this section, we report the results of numerical experiments applying our described construction of TTNOs for diverse quantum spin systems. These experiments were conducted using MATLAB 2018b software along with the \texttt{MATLAB tensor toolboxes}~\cite{TTB_Software, tensorlab3.0} and the \texttt{hm-toolbox}~\cite{MRK20}.

\subsection{Closed quantum spin system}\label{subsec:num_ex1}
In the first example, we consider the operator associated with the Schrödinger equation with long-range unitary dynamics for spin-$\frac{1}{2}$ particles:
\begin{align} \label{eq:closedSpinSystemOperator}
        \mathcal H = \Omega \sum_{k=1}^{d} \sigma_x^{(k)} + \Delta \sum_{k=1}^{d} n^{(k)} + \nu \sum_{i < j} \frac{1}{(j - i)^{\alpha}} n^{(i)} n^{(j)}.
\end{align}
Here, $\sigma_x^{(k)}$ denotes the first $2\times 2$ Pauli matrix acting on the $k$th site, while $n^{(i)}$ and $n^{(j)}$ denote the projectors onto the $i$th and $j$th excited states, respectively. See \cite{sulz2023} for a more detailed description. The parameters $\Omega$, $\Delta$, and $\nu$ are model-related, while the parameter $\alpha$ describes various interaction regimes among the particles:
\begin{itemize}
    \item $\alpha = 0$ encodes an all-to-all interaction;
    \item $\alpha = \infty$ encodes nearest-neighbor interactions;
    \item $0<\alpha<\infty$ encodes long-range interactions. 
\end{itemize}
In quantum physics related settings, values of interest are given by $\alpha = 1$ (Coulomb interaction), $\alpha = 3$ (dipole-dipole interaction) or $\alpha= 6$ (van der Waals interaction)~\cite{saffman2010}. 

In Figure \ref{fig:TTNO_unitary_alpha1}, we observe that the relative error is independent of the total number of particles and is proportional to the chosen HSS tolerance parameter $\varepsilon = 10^{-12}$, while the TTNO rank satisfies the theoretical bound, with an additional rank arising from the Laplacian-like part of the operator. The reference operator is constructed using the unstructured construction from section~\ref{subsec:unstruct_case}. The relationship between the relative error in Frobenius norm and the HSS tolerance parameter $\varepsilon$ is further investigated in Figure~\ref{fig:TTNO_unitary_alpha1_hss_tols}, where we observe that the error grows linearly with respect to the HSS tolerance. 
Finally, we vary the parameter $\alpha$ and evaluate the TTNO rank across different interaction regimes. In Figure \ref{fig:TTNO_unitary_diff_alpha_d256}, we observe that an increase in $\alpha$ results in a decrease in the representation rank.

\begin{figure}[ht]
    \centering
    \includegraphics[trim ={20mm 0mm 0 5mm},clip,scale=0.35]{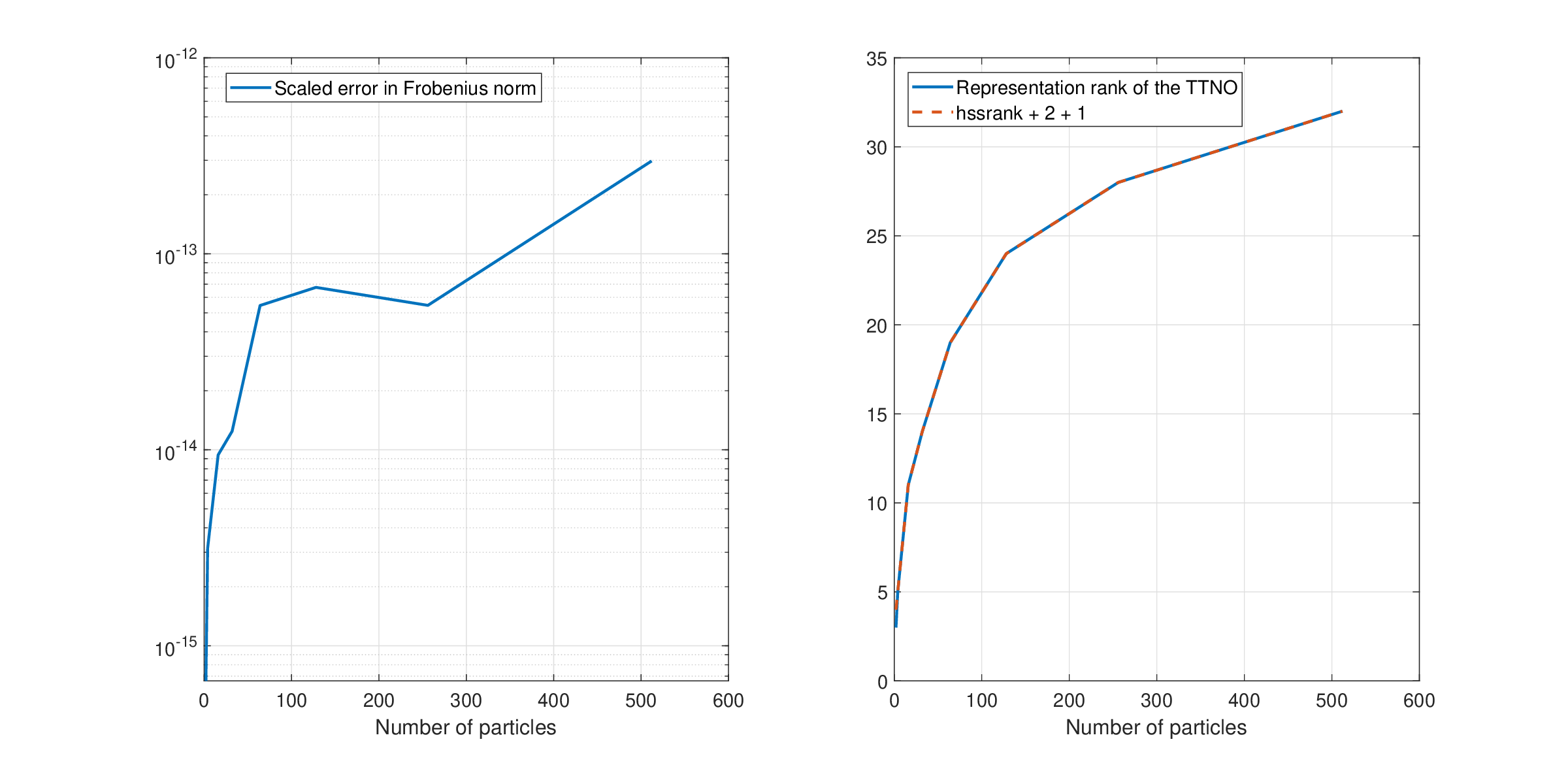}
    \caption{Long-range unitary Hamiltonian with given parameters: $\Omega=3$, $\Delta=-2$, $\nu = 2$, $\alpha = 1$ and HSS tolerance $10^{-12}$. Left: Relative error of the TTNO vs the number of particles. Right: Representation rank of the TTNO (solid line) and the excepted ranks (dashed line) versus the number of particles.}
    \label{fig:TTNO_unitary_alpha1}
\end{figure}

\begin{figure}[ht]
    \centering
    \includegraphics[trim ={40mm 0mm 0 10mm},clip,scale=0.35]{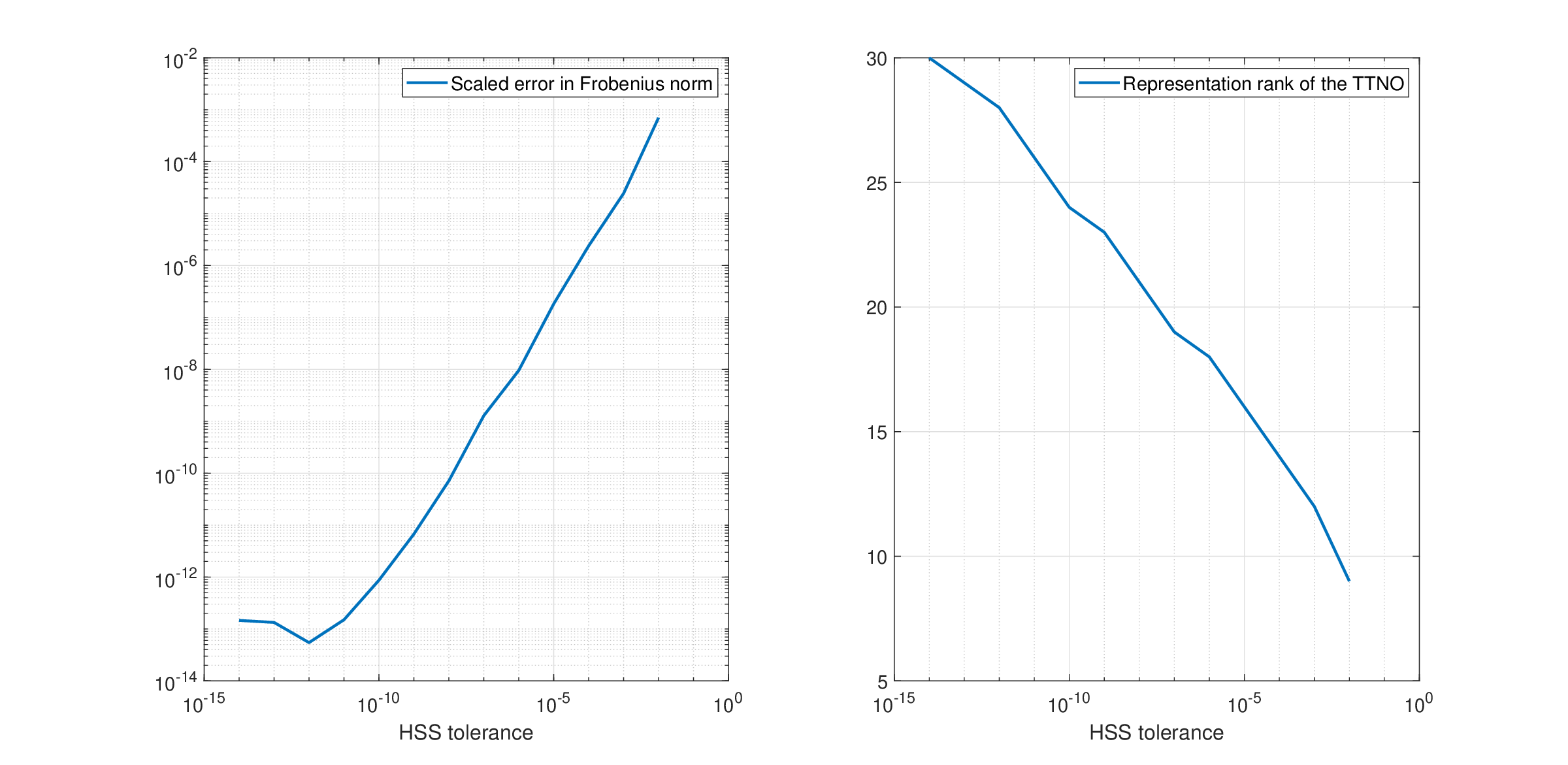}
    \caption{Long-range unitary Hamiltonian with given parameters: $\Omega=3$, $\Delta=-2$, $\nu = 2$, $\alpha = 1$ and $d=256$. Left: Relative error of the TTNO vs the HSS tolerance. Right: Representation rank of the TTNO versus the HSS tolerance.}
    \label{fig:TTNO_unitary_alpha1_hss_tols}
\end{figure}

\begin{figure}[ht]
    \centering
    \includegraphics[trim ={40mm 0mm 0 10mm},clip,scale=0.35]{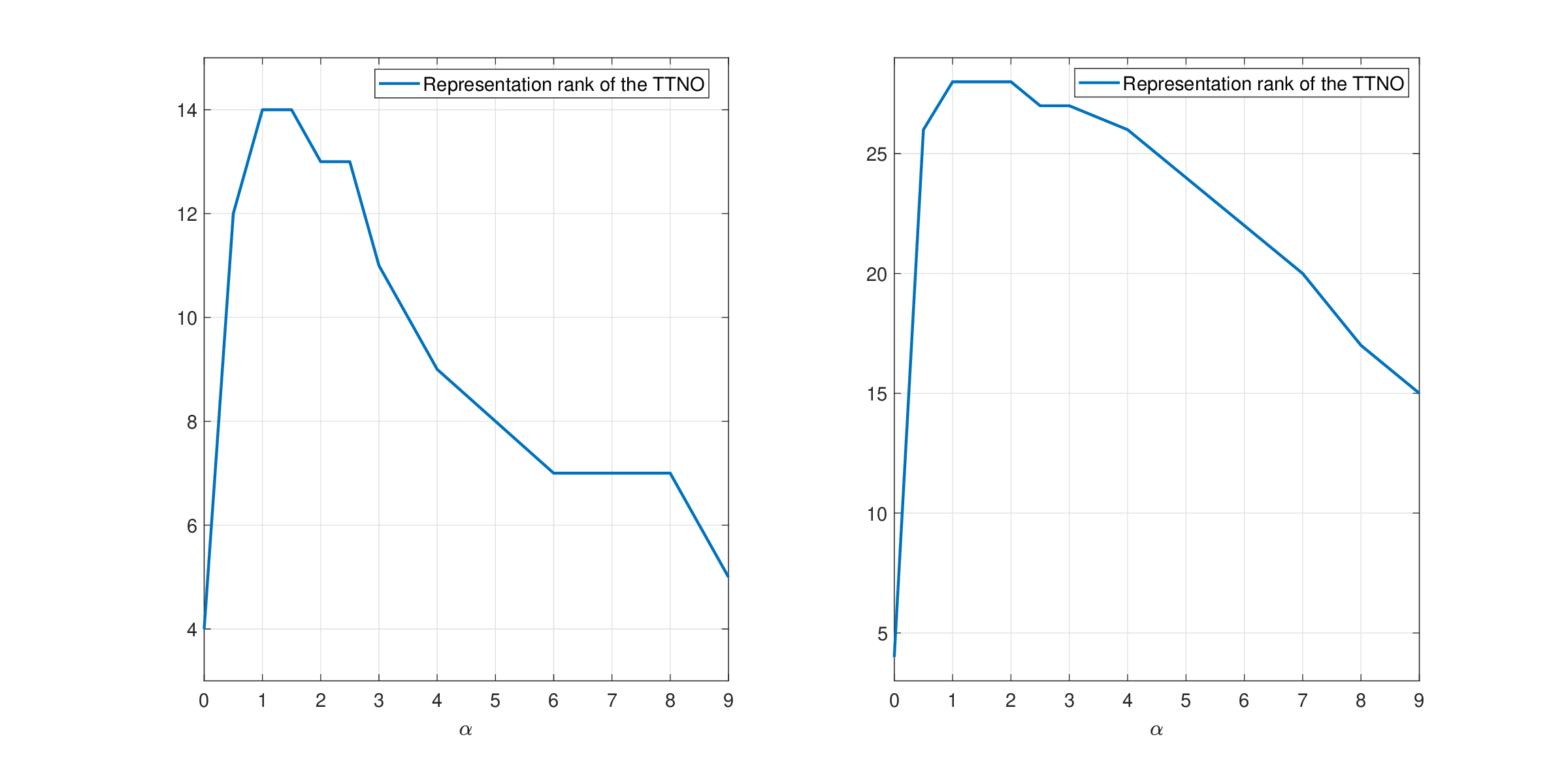}
    \caption{Long-range unitary Hamiltonian with given parameters: $\Omega=3$, $\Delta=-2$, $\nu = 2$ and $d=256$. Left: Representation rank of the TTNO versus different values of $\alpha$ computed with HSS tolerance $10^{-4}$. Right: Representation rank of the TTNO versus different values of $\alpha$ computed with HSS tolerance $10^{-12}$.}
    \label{fig:TTNO_unitary_diff_alpha_d256}
\end{figure}

\subsection{Synthetic example: Hamiltonian with interactions of large HSS rank}
In the next example, we consider the Hamiltonian given by
\begin{align*}
    \mathcal H = \Omega \sum_{k=1}^{d} \sigma_x^{(k)} + \Delta \sum_{k=1}^{d} n^{(k)} + \sum_{i< j} 
    \frac{1}{1-\text{cos}(j-i)} \cdot n^{(i)}n^{(j)} 
\end{align*}
We observe that the resulting interaction matrix admits a maximal HSS rank of $\frac{d}{2}$. Thus, the expected TTNO representation rank is $\frac{d}{2}+3$, as numerically confirmed in Figure \ref{fig:TTNO_bad_example}. This synthetic example illustrates that it is possible, but not necessarily advisable, to use TTNO representations via HSS decompositions even when a high HSS rank is expected a priori.

\begin{figure}[ht]
    \centering
    \includegraphics[trim ={40mm 0mm 0 10mm},clip,scale=0.35]{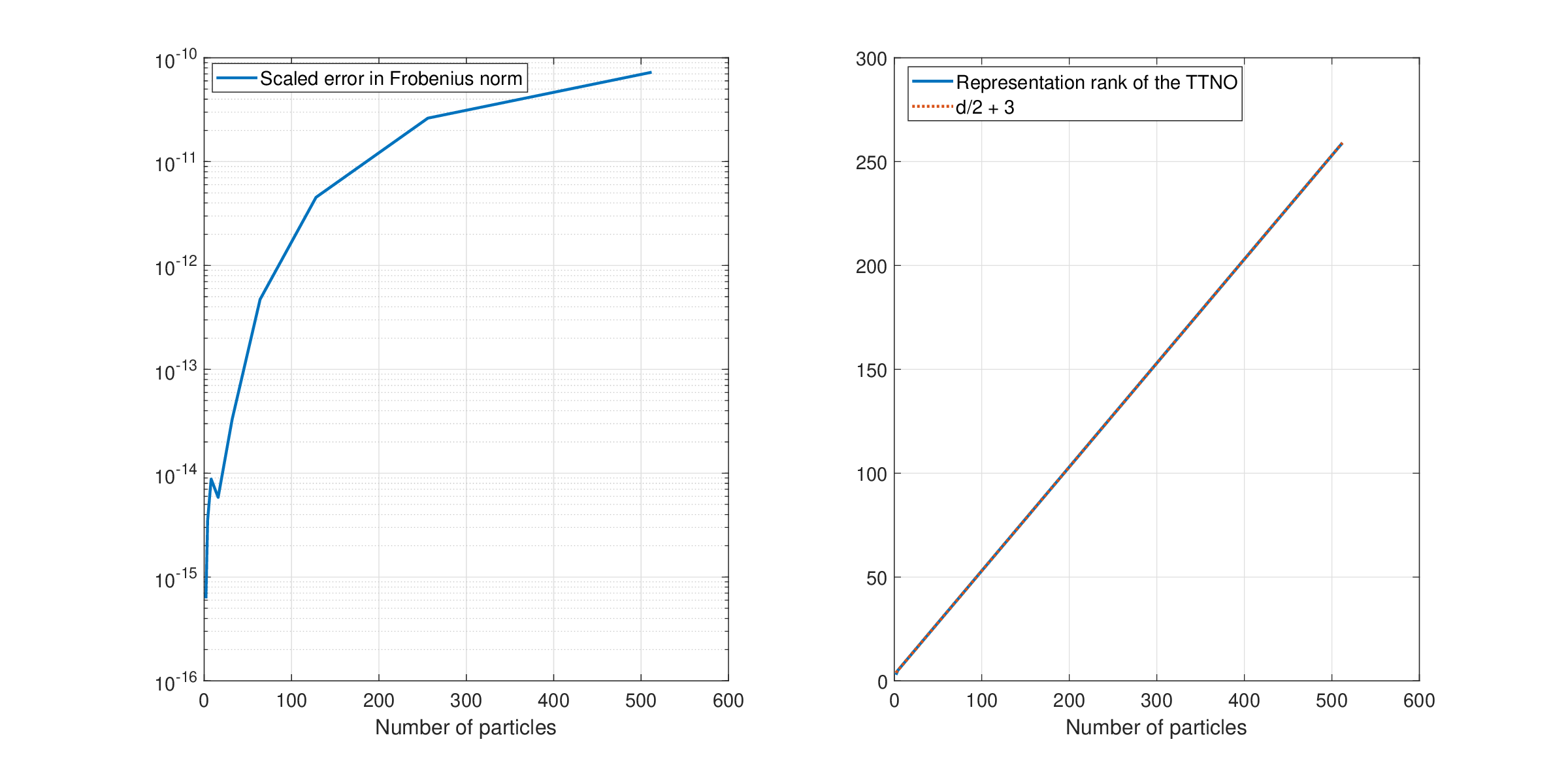}
    \caption{Long-range unitary Hamiltonian (synthetic example) with given parameters: $\Omega = 3$ and $\Delta = -2$. Left: Relative error of the TTNO vs the number of particles. Right: Representation rank of the TTNO (solid line) and $d/2 +3$ (dashed line) vs the number of particles. }
    \label{fig:TTNO_bad_example}
\end{figure}

\subsection{Open quantum spin system}\label{subsec:num_ex2}
In the next example, we consider a one-dimensional quantum systems consisting of $d$ distinguishable spin-$\frac{1}{2}$ particles following a Markovian open quantum dynamics governed by the operator
    \begin{align} \label{eq:openQuantumSystemOperator}
        \mathcal{L} &= \Omega \sum\limits_{k=1}^{d} \left[ -i \sigma_x \otimes \I + i \I \otimes \sigma_x^\top \right]^{(k)}
    	+ \Delta \sum\limits_{k=1}^{d} \left[ -\mathrm{i} n \otimes \I + \mathrm{i} \I \otimes n^\top \right]^{(k)} \\ \nonumber
    	&+ \gamma \sum\limits_{k=1}^{d} \left[ J \otimes (J^*)^\top - \frac{1}{2}J^*J \otimes \I - \frac{1}{2} \I\otimes (J^*J)^\top \right]^{(k)} \\ \nonumber
    	&+ \frac{\nu}{2c_\alpha} \sum_{i < j} \frac{-\mathrm{i}}{(j-i)^\alpha} \left[  n \otimes \I \right]^{(i)}  \left[ n \otimes \I  \right]^{(j)} + \frac{\nu}{2c_\alpha} \sum_{i < j} \frac{\mathrm{i}}{(j-i)^\alpha} \left[ \I \otimes n^\top \right]^{(i)}  \left[ \I \otimes n^\top \right]^{(j)},
    \end{align}
where
\[
    J = \begin{pmatrix} 0 & 0 \\ 1 & 0 \end{pmatrix}, \quad
    n = \begin{pmatrix} 1 & 0 \\ 0 & 0 \end{pmatrix}, \quad 
    c_\alpha = \sum_{k=1}^d \frac{1}{k^\alpha} \, .
\]
The parameters $\Omega,\Delta,\gamma,\nu,\alpha$ are model related. For a detailed description, we refer to \cite{sulz2023} and the references therein. 

We once again observe that the TTNO approximation can be achieved up to the selected HSS tolerance, independently of the number of particles, as seen in Figure \ref{fig:TTNO_open_alpha1}. Furthermore, a parameter study for $\alpha$ is illustrated in Figure \ref{fig:TTNO_open_diff_alpha_d256}, where it can be observed that the operator describing the Markovian open quantum system dynamics also admits a TTNO representation of low representation rank. The linear scaling influence of the HSS tolerance is provided in Figure~\ref{fig:TTNO_open_hss_plot}.

\begin{figure}[ht]
    \centering
    \includegraphics[trim ={40mm 0mm 0 10mm},clip,scale=0.35]{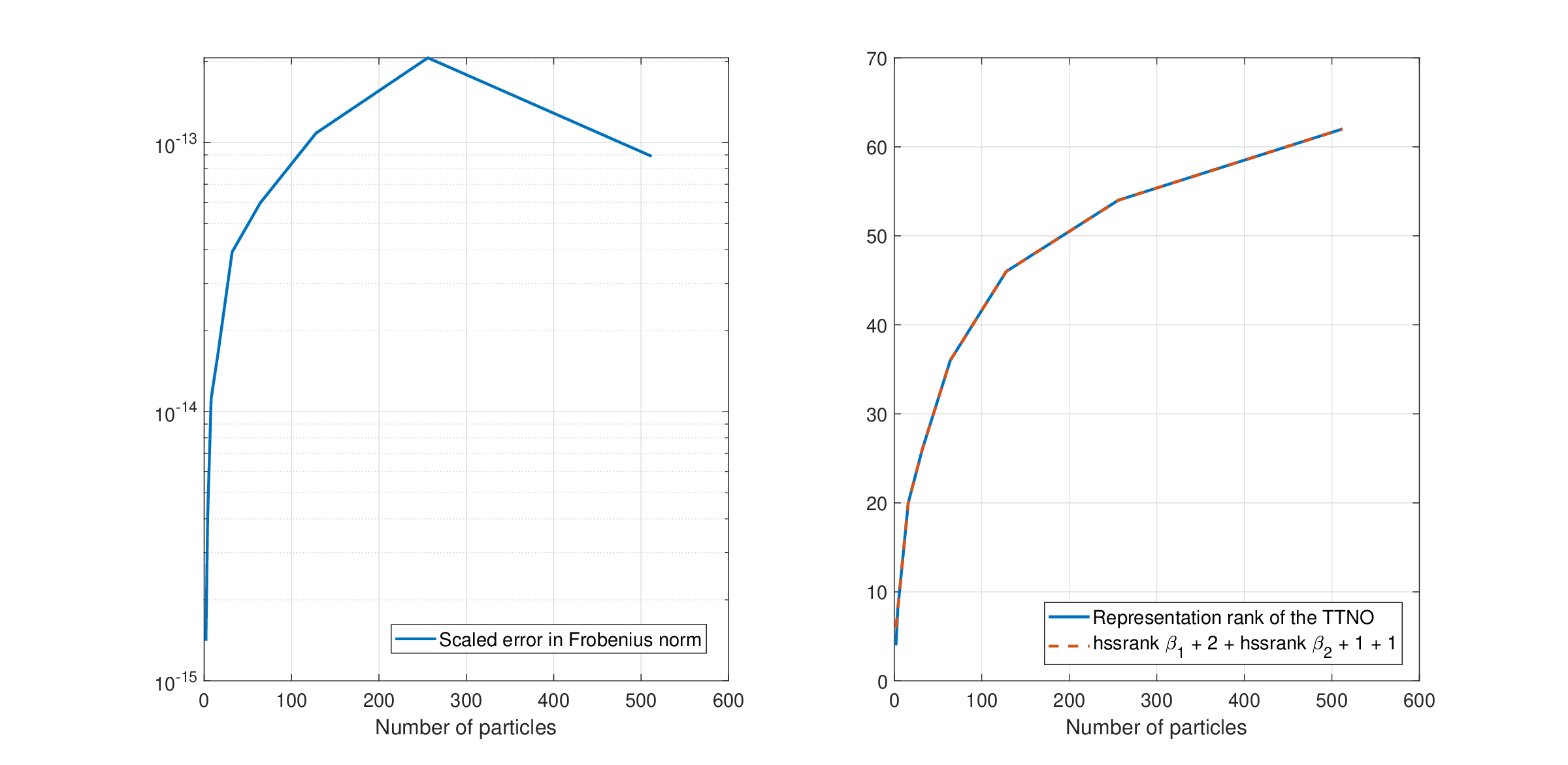}
    \caption{Open long-range Hamiltonian with given parameters: $\Omega=0.4$, $\Delta=-2$, $\gamma = 1$, $\nu = 2$, $\alpha = 1$ and HSS tolerance $10^{-12}$. Left: Relative error of the TTNO vs the number of particles. Right: Representation rank of the TTNO (solid line) and the excepted ranks (dashed line) versus the number of particles.}
    \label{fig:TTNO_open_alpha1}
\end{figure}

\begin{figure}[ht]
    \centering
    \includegraphics[trim ={40mm 0mm 0 10mm},clip,scale=0.35]{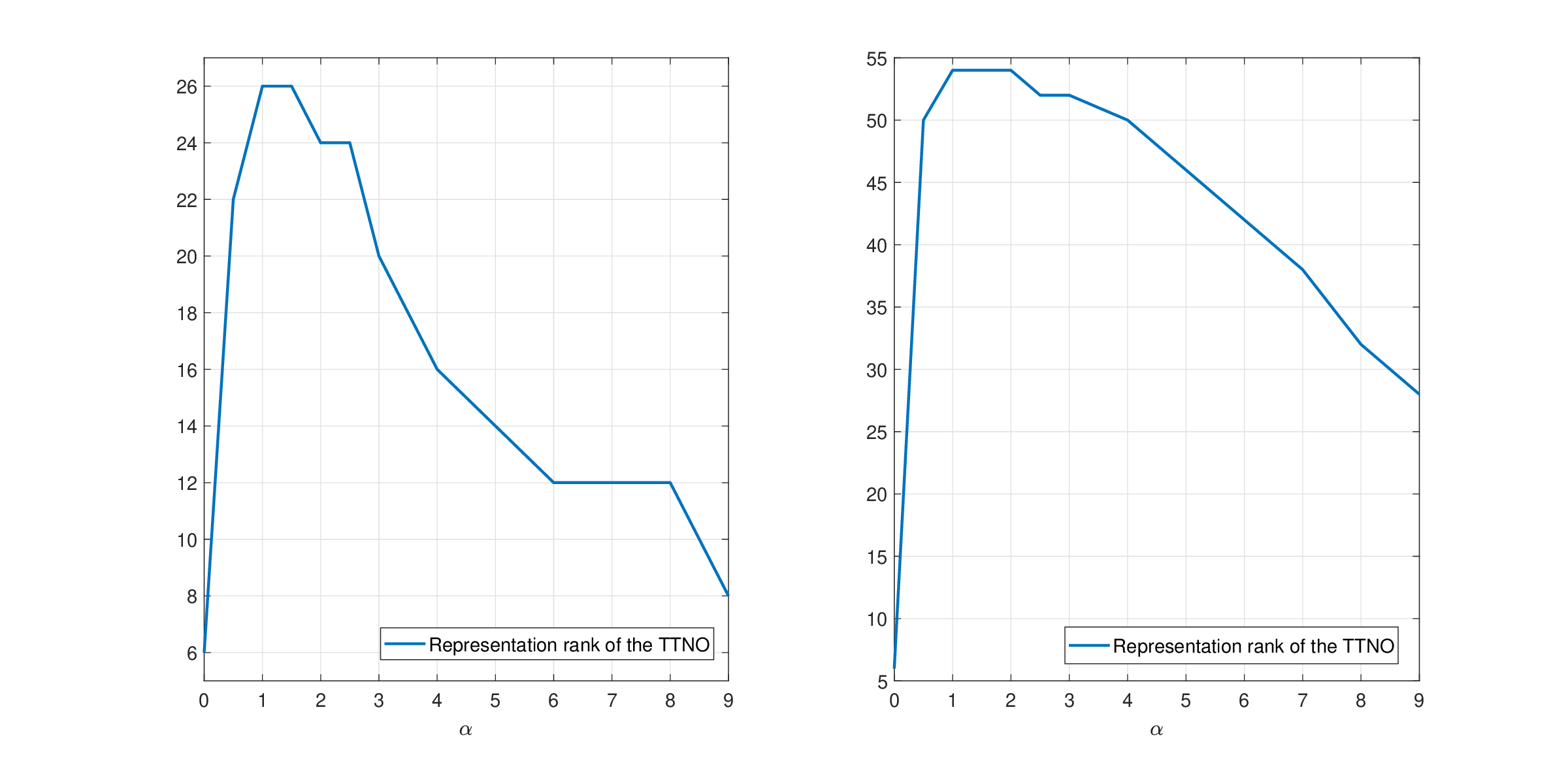}
    \caption{Open long-range Hamiltonian with given parameters: $\Omega=0.4$, $\Delta=-2$, $\gamma = 1$, $\nu = 2$, $\alpha = 1$ and $d=256$. Left: Representation rank of the TTNO versus different values of $\alpha$ computed with HSS tolerance $10^{-4}$. Right: Representation rank of the TTNO versus different values of $\alpha$ computed with HSS tolerance $10^{-12}$.}
\label{fig:TTNO_open_diff_alpha_d256}
\end{figure}

\begin{figure}[ht]
    \centering
    \includegraphics[trim ={40mm 0mm 0 10mm},clip,scale=0.35]{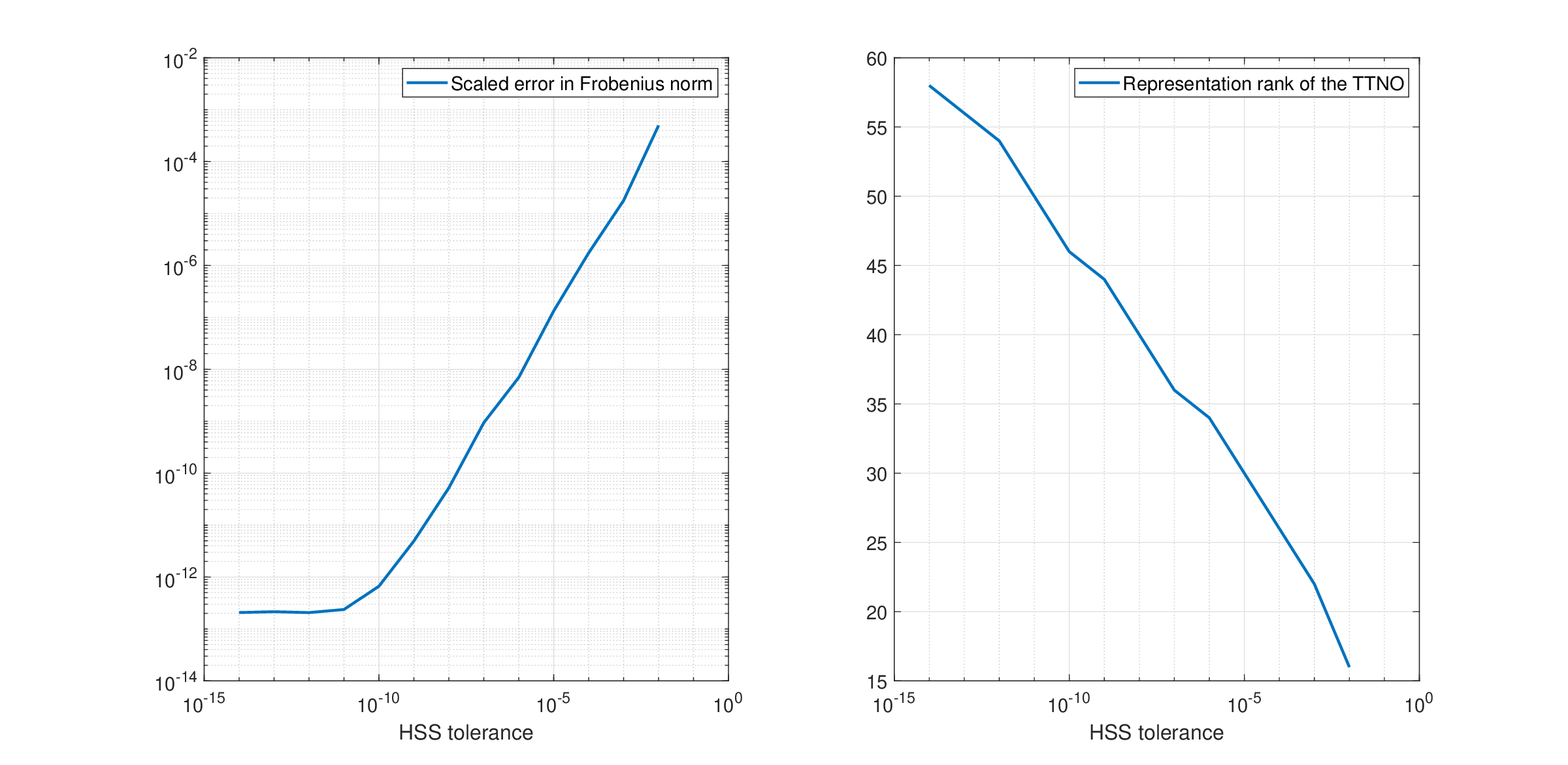}
    \caption{Open long-range Hamiltonian with given parameters: $\Omega=0.4$, $\Delta=-2$, $\gamma = 1$, $\nu = 2$, $\alpha = 1$ and $d=256$. Left: Relative error of the TTNO vs the HSS tolerance. Right: Representation rank of the TTNO versus the HSS tolerance.}
\label{fig:TTNO_open_hss_plot}
\end{figure}

\subsection{Comparison among tree tensor network formats}
To conclude, we would like to remind that the proposed methodology offers the advantage of inner flexibility, enabling the exploration of various tree tensor network formats within a single framework.

In Figure~\ref{fig:TTNO_TT_compare_alpha1}, we consider the operator \eqref{eq:closedSpinSystemOperator} and compare the ranks and memory usage of the decompositions for different numbers of particles obtained using balanced binary trees against those using unbalanced binary trees, which represent well-known tensor formats like tree tensor trains/matrix product states -- widely accepted standards in physics, including but not limited to quantum dynamics. It is observed that the representation rank produced by the unbalanced binary tree format, i.e. TT/MPS, is smaller compared to the balanced binary tree. A similar result is obtained for the open quantum system operator \eqref{eq:openQuantumSystemOperator}, as illustrated in  Figure~\ref{fig:TTNO_TT_open_compare_alpha1}. 

However, it is essential to note that while the representation rank provides valuable information, it should not be the sole reference measure for assessing TTNO compression quality. The overall storage complexity should also be taken into account. These examples showcase that the balanced binary tree format requires less memory or at most an equal amount compared to the unbalanced binary tree format (TT/MPS) as the number of particles increases. 

Furthermore, as originally suggested in \cite{CLS23} for a nearest-neighbor spin system, it's important to note that the representation rank of the \emph{state} in the TTN format tends to be significantly higher for unbalanced binary trees compared to balanced binary trees. Efficient quantum computations require careful consideration of both the representation rank of the TTNO and that of the state. This observation strongly indicates that balanced binary trees represent a promising approach for simulations involving long-range interacting quantum systems.

\begin{figure}[ht]
    \centering
    \includegraphics[trim ={40mm 0mm 0 10mm},clip,scale=0.35]{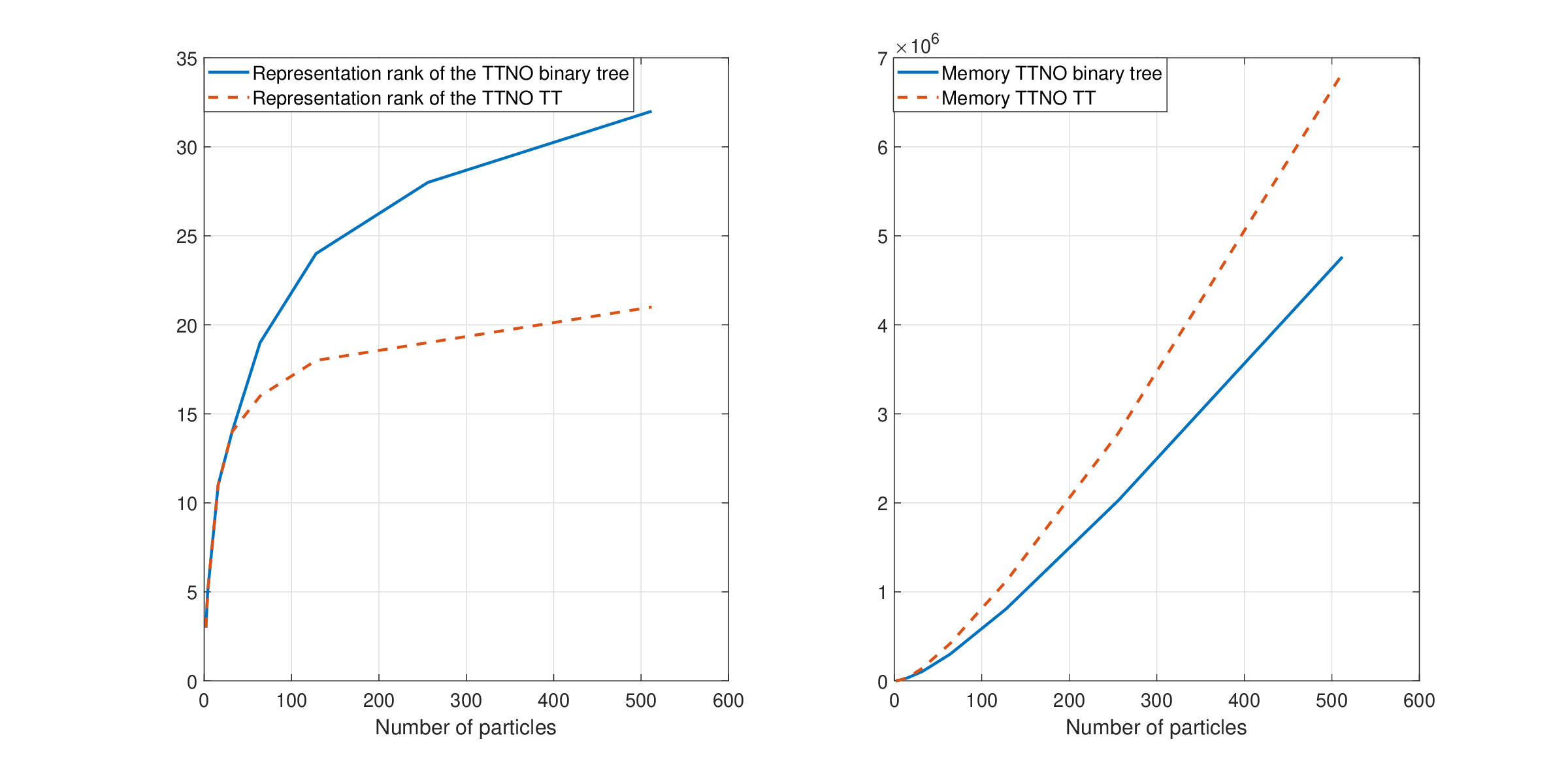}
    \caption{Long-range unitary Hamiltonian with given parameters: $\Omega=3$, $\Delta=-2$, $\gamma = 1$, $\alpha=1$ and $\nu = 2$. Left: Representation rank of the TTNO for balanced (solid line) and unbalanced binary trees (dashed line). Right: Memory in bytes of the TTNO for balanced (solid line) and unbalanced binary trees (dashed line).}
\label{fig:TTNO_TT_compare_alpha1}
\end{figure}

\begin{figure}[ht]
    \centering
    \includegraphics[trim ={40mm 0mm 0 10mm},clip,scale=0.35]{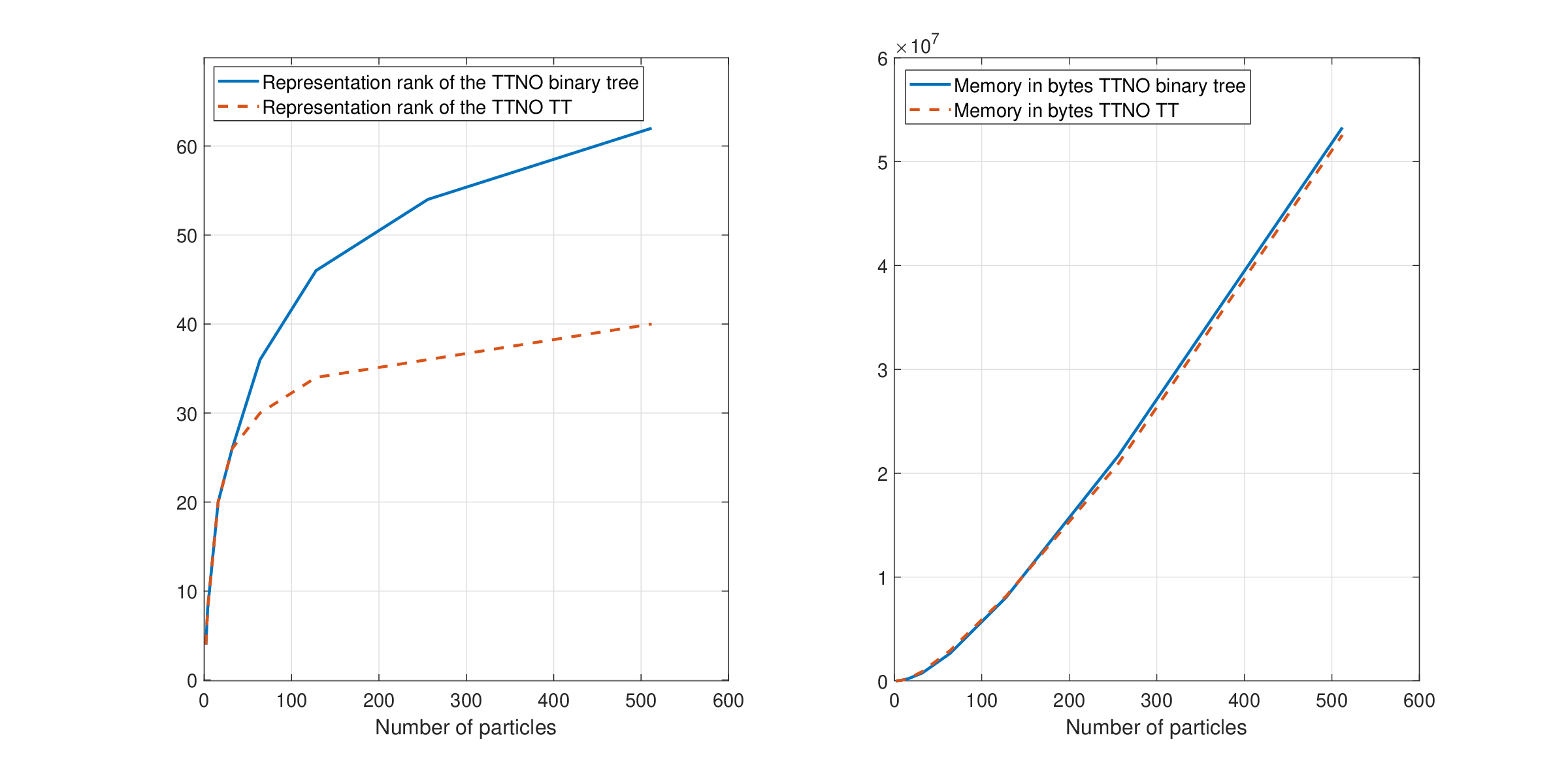}
    \caption{Open long-range Hamiltonian with given parameters: $\Omega=0.4$, $\Delta=-2$, $\gamma = 1$, $\nu = 2$, $\alpha = 1$ and HSS tolerance $10^{-12}$. Left: Representation rank of the TTNO for balanced (solid line) and unbalanced binary trees (dashed line). Right: Memory in bytes of the TTNO for balanced (solid line) and unbalanced binary trees (dashed line).}
\label{fig:TTNO_TT_open_compare_alpha1}
\end{figure}

\smallskip
\paragraph{\textbf{Acknowledgements:}}
 We thank Leonardo Robol for his help with the hm-toolbox and Christian Lubich for helpful discussions. 

\bibliographystyle{siam}
\bibliography{main}

\end{document}